\definecolor{myred}{gray}{0} 
\definecolor{light-gray}{gray}{0.8}
\newenvironment{subfigures}
 {\begin{minipage}{\columnwidth}\def\@captype{figure}\centering}
 {\end{minipage}}
\newtheorem{theorem}{Theorem}[section]
\theoremstyle{remark}
\newtheorem{remark}[theorem]{Remark}
\theoremstyle{definition}
\newcommand{\vect}[1]{#1} 
\newcommand{\Vect}[1]{{\bf #1}} 
\newcommand\Sect{Section } 
\newcommand\Sects{Sections }
\newcommand\Eqn{Equation }
\newcommand\Eqns{Equations }
\newcommand\Fig{figure }
\newcommand\Table{Table }
\newcommand\vel{u}          
\newcommand\Var{w}   
\DeclareMathAlphabet\mathbfcal{OMS}{cmsy}{b}{n}
\newcommand\transitionA{\mathbfcal{A}}
\newcommand\systemA{\mathbfcal{A}}
\let\originalleft\left
\let\originalright\right
\renewcommand{\left}{\mathopen{}\mathclose\bgroup\originalleft}
\renewcommand{\right}{\aftergroup\egroup\originalright}
\numberwithin{equation}{section}    
\date{\today}             
\title{Spatially Adaptive Projective Integration Schemes \\ For Stiff Hyperbolic Balance Laws With Spectral Gaps}
\author{
Julian Koellermeier\footnote{Corresponding author, email address {\tt julian.koellermeier@kuleuven.be}} \footnote{Department of Computer Science, KU Leuven},
Giovanni Samaey\footnotemark[\value{footnote}]
}
\begin{document}

\maketitle



\begin{abstract}
    Stiff hyperbolic balance laws exhibit large spectral gaps, especially if the relaxation term significantly varies in space. Using examples from rarefied gases and the general form of the underlying balance law model, we perform a detailed spectral analysis of the semi-discrete model that reveals the spectral gaps. Based on that, we show the inefficiency of standard time integration schemes expressed by a severe restriction of the CFL number. We then develop the first spatially adaptive projective integration schemes to overcome the prohibitive time step constraints of standard time integration schemes. The new schemes use different time integration methods in different parts of the computational domain, determined by the spatially varying value of the relaxation time. We use our analytical results to derive accurate stability bounds for the involved parameters and show that the severe time step constraint can be overcome. The new adaptive schemes show good accuracy in a numerical test case and can obtain a large speedup with respect to standard schemes.
\end{abstract}

\section{Introduction}
\label{sec:1}
Many applications from science and engineering are modeled by partial differential equations in balance law form, often including stiff relaxation terms with different time scales. Typical examples can be found in aerodynamics, rarefied gases, and atmospheric flows \cite{Anderson1991,Klein2010,Struchtrup2006,Torrilhon2016}. The relaxation terms in the modeled equations can lead to a time scale separation governed by a small parameter, called the relaxation time \cite{Gear2003}. This relaxation time can vary largely throughout the domain, giving rise to different modes developing on different time scales \cite{Koellermeier2021}.

For a large time scale separation, indicated by a spectral gap of the eigenvalue spectrum, the equations become stiff after the spatial discretization. For standard explicit time stepping schemes, this stiffness leads to prohibitively small time steps that are  proportional to the relaxation time. This is problematic in many applications including models from rarefied gases derived with the help of kinetic theory \cite{Fan2016,Koellermeier2017d}.
In the limit of vanishing relaxation time, an asymptotic preserving scheme is necessary for a stable computational simulation and feasible runtime \cite{Jin2010}.

It is possible to use an implicit scheme for the discretization of the semi-discrete PDE. However, fully implicit solutions are very expensive and not appropriate for hyperbolic fluid dynamics problems \cite{Lafitte2017}. Hybrid schemes like the implicit-explicit IMEX schemes \cite{Pareschi2005} still have a remaining implicit term that might require a special treatment. Splitting the equations into a stiff and a non-stiff term is effective for lower order schemes, but cannot be easily generalized to higher order \cite{Strang1968,Tcheremissine2001}.

Projective integration (PI) is a simple to implement, explicit time integration scheme that mitigates stiffness problems by performing a number $K$ of small inner time steps of size $\delta t$ followed by a large extrapolation step of size $\Delta t$ \cite{Gear2003}. The small time step size $\delta t$ is fixed by the fast relaxation time whereas the extrapolation step $\Delta t$ can be chosen according to a macroscopic time step governed by a standard $CFL$ condition. The number of inner iterations $K$ is then determined by the necessary stability properties, but often taken as a small fixed number. The scheme has been successfully applied to different models arising from kinetic equations \cite{Lafitte2010,Melis2017}. It was extended to higher-order in space and time using a Runge-Kutta scheme as outer integrator in \cite{Lafitte2016,Lafitte2017} and a telescopic scheme with different levels of integrators was developed and applied in \cite{Melis2019,Melis2016}. Recently, the models have been applied to hyperbolic moment models in \cite{Koellermeier2020h,Koellermeier2021}.

So far, all existing PI schemes use constant method parameters $K, \delta t, \Delta t$ throughout the whole spatial domain. This approach is optimized for a spatially constant relaxation time. The existing methods are therefore not able to take advantage of differences in the relaxation times in different parts of the domain. While the PI scheme is efficient in the stiff region, it might not be needed in parts of the domain with large relaxation time. 

In this paper, we introduce the first spatially adaptive projective integration schemes (API) using a domain decomposition approach with buffer cells at the respective boundaries. To the best of our knowledge, no such scheme was described in the literature so far. In the methods we propose, each part of the domain uses a different time step size and a potentially different time integration scheme, based on varying relaxation times throughout parts of the domain and different time scales. This leads to a significant speedup in non-stiff regions and reduces the stability constraints in each part of the domain. The buffer cells for reconstruction at the boundary are updated based on interpolation between two adjacent time levels. Similar approaches have been used before, e.g., coupling Lattice Boltzmann models or rarefied gas models and other PDEs \cite{Degond2005,Garcia1999,Kolobov2007,Tiwari1998,VanLeemput2006,Xiao2020}.

The main focus of this paper is the stability analysis of those newly derived API schemes to ensure a stable integration of the model throughout the whole domain using adaptively chosen parameters for the different schemes in the different parts of the domain. This requires an in-depth stability analysis of the semi-discrete model before performing the time discretization. We consider a general balance law form with standard spatial discretization schemes and derive the spectrum of the model based on some simple assumptions on the relaxation term and the properties of the transport term. This will then allow to perform a linear stability analysis for a large class of standard non-adaptive schemes and newly derived adaptive schemes. As examples, we derive the following new schemes: (1) an adaptive Forward Euler scheme (AFE); (2) an adaptive Projective Forward Euler scheme combined with a Forward Euler scheme in the semi-stiff region (APFE); and (3) an adaptive Projective Projective Forward Euler scheme combined with a projective Forward Euler scheme in the semi-stiff region (APPFE). In addition, we outline the extension to possible higher-order adaptive Projective Runge Kutta schemes (APRK) or adaptive Telescopic Projective Integration schemes (ATPI), similar to \cite{Lafitte2017,Melis2019}. After analytically deriving stable parameter bounds for all schemes in the presence of one stiff domain part and one semi-stiff domain part, we numerically show that these parameters indeed lead to a stable scheme. We derive analytical estimates for the speedup of the new adaptive schemes with respect to a standard global Forward Euler scheme or global Projective Forward Euler scheme.


The rest of this paper is structured as follows: In \Sect \ref{sec:2}, we first introduce the general type of stiff hyperbolic balance law and give two examples for models from rarefied gases. Additionally, we describe the spatial discretization based on standard finite volume schemes. The spatial discretization of standard models allows for a detailed spectral analysis that reveals a clear spectral gap, which is also validated numerically. \Sect \ref{sec:standard_schemes} considers standard FE and PFE schemes defined in the whole domain for which a restrictive CFL condition is analytically derived. The adaptive schemes are derived in \Sect \ref{sec:adaptive_schemes} along with an analysis of the stability properties and a numerical validation of the theoretical results. A numerical test case is solved before speedup estimates are derived and exemplified in \Sect \ref{sec:results}. 
The paper ends with a short conclusion.     
\section{Stiff hyperbolic balance laws with spectral gaps}
\label{sec:2}
Many problems in science and engineering can be modelled as balance laws with a left-hand side transport term and a right-hand side relaxation term, which can be interpreted as a source term. In this paper, we consider non-conservative systems of the following form
\begin{equation}
\label{e:vars_system1D}
    \frac{\partial \Var}{\partial t}  + \Vect{A}\left( \Var \right) \frac{\partial \Var}{\partial x}  = - \frac{1}{\epsilon(x)}\vect{S}(\Var),
\end{equation}
where $\Var \in \mathbb{R}^{N}$ is the unknown variable, $\Vect{A}\left( \Var \right) \in \mathbb{R}^{N \times N}$ is the system matrix of the transport term, and the term containing $\vect{S} \in \mathbb{R}^{N}$ is the possibly stiff right-hand side source term. We are particularly interested in small and spatially varying values of the relaxation time $\epsilon(x)\in \mathbb{R}^+$.

Note that \Eqn \eqref{e:vars_system1D} is a generalization of the standard form of a conservation law with right-hand side source term
\begin{equation}
\label{e:conservative_form}
    \frac{\partial \Var}{\partial t}  + \frac{\partial }{\partial x}\vect{F}\left( \Var \right)  = - \frac{1}{\epsilon(x)}\vect{S}(\Var),
\end{equation}
where $\vect{F}\left( \Var \right)$ is the flux function depending on the unknown variable. In the notation of \Eqn \eqref{e:vars_system1D}, the system matrix can then be seen as the Jacobian of the flux function, i.e., $\Vect{A} = \frac{\partial \vect{F}}{\partial \Var}$. Even for zero source term $\vect{S}= 0$, \Eqn \eqref{e:vars_system1D} is not necessarily a conservative system. In comparison to \Eqn \eqref{e:conservative_form}, the general form of \Eqn \eqref{e:vars_system1D} thus also includes so-called non-conservative systems, for which no flux function exists. Those systems occur in a lot of contexts, e.g. in rarefied gases \cite{Koellermeier2017a} and free surface flows \cite{Koellermeier2020c}.

The simplest form of the model equation \eqref{e:vars_system1D} is the scalar equation
\begin{equation}
\label{e:scalar}
    \frac{\partial w}{\partial t}  + a \frac{\partial w}{\partial x}  = - \frac{1}{\epsilon(x)} w,
\end{equation}
which models transport with constant advection velocity $a\in \mathbb{R}$ and relaxation to zero with relaxation rate $\frac{1}{\epsilon(x)}$. Note that the constant transport velocity leads to a time step constraint of the form $\Delta t \leq CFL \frac{\Delta x}{\left|a\right|}$ for given CFL number $CFL \leq 1$. For small values of $\epsilon(x)$, the right-hand sides becomes stiff, which leads to the constraint $\Delta t < \frac{1}{\epsilon(x)}$. While this system can exhibit a spectral gap in Fourier space \cite{Lafitte2010}, there is only one scalar variable, so that there will be no spectral gap in the physical space between fast and slow variables relaxing at different time scales, which characterizes many physical processes.
We therefore consider systems of equations and exemplarily consider two examples from rarefied gases in the following two sections.

\subsection{Hyperbolic moment models}
\label{sec:HME}
In rarefied gases, the mass density distribution function $f(t, x, c)$ can be expanded in a truncated Hermite sum in the microscopic velocity space $c\in \mathbb{R}$ with coefficients $f_{i}(t,x)$ for $i=3,\ldots, M$ in addition to its macroscopic moments $\rho(t,x)$, $\vel(t,x)$, $\theta(t,x)$, denoting density, bulk velocity, and temperature, respectively.
\begin{equation}
    \label{e:vars_expansion}
    f(t,x,c) = \sum_{i=0}^M f_{i}(t,x) \phi^{[\vel,\theta]}_{\vect{\alpha}}\left(\frac{c-\vel}{\theta}\right), \quad f_0 = \rho, f_1 = 0, f_2 = 1
\end{equation}
The vector of unknown variables is then given by $\Var = \left(\rho, \vel, \theta, f_3, \ldots, f_N\right) \in \mathbb{R}^{M+1}$.

The evolution of the variables is governed by the non-linear hyperbolic moment equations (HME) \cite{Cai2013b}, which are given by the system matrix $\Vect{A}_{HME} \in \mathbb{R}^{(M+1)\times (M+1)}$ defined by
\begin{equation}
\label{e:QBME_A}
\Vect{A}_{HME} = \setlength{\arraycolsep}{1pt}
\left(
  \begin{array}{cccccccc}
    \vel & \rho &  &  &   &   &   &  \\
    \frac{\theta}{\rho} & \vel & 1 &  &  &   &   &  \\
    & 2 \theta & \vel & \frac{6}{\rho} &  &  &   &   \\
    & 4 f_3 & \frac{\rho \theta}{2} & \vel & 4 &  &  &  \\
    -\frac{\theta f_3}{\rho} & 5f_4 & \frac{3f_3}{2} & \theta & \vel & 5 &  &    \\
    \vdots & \vdots & \vdots & \vdots &  \ddots & \ddots & \ddots &  \\
    -\frac{\theta f_{M-2}}{\rho} & Mf_{M-1} & \frac{\left(M-2\right)f_{M-2}+ \theta f_{M-4}}{2} \textcolor{myred}{-\frac{M(M+1)f_{M}}{2 \theta}}& -\frac{3f_{M-3}}{\rho}  &  & \theta & \vel & M \\
    -\frac{\theta f_{M-1}}{\rho} & (M\hspace{-0.1cm}+\hspace{-0.1cm}1) f_{M} & \textcolor{myred}{-f_{M-1}} \hspace{-0.1cm} + \hspace{-0.1cm} \frac{\theta f_{M-3}}{2} & \textcolor{myred}{\frac{3(M+1)f_{M}}{\rho \theta}} \hspace{-0.1cm} - \hspace{-0.1cm} \frac{3f_{M-2}}{\rho} &  &  & \theta & \vel \\
  \end{array}
\right),
\setlength{\arraycolsep}{6pt}
\end{equation}
and the source term $\vect{S}(\Var) \in \mathbb{R}^{M+1}$ on the right-hand side as the collision term that can be modelled using the simple BGK model \cite{Bhatnagar1954} as
\begin{equation}
\label{e:BGK_term}
    - \frac{1}{\epsilon(x)} \vect{S}(\Var)  = - \frac{1}{\epsilon(x)} diag \left( 0,0,0,1, \ldots,1 \right) \Var,
\end{equation}
for relaxation time $\epsilon(x) \in \mathbb{R}_+$. Note how the source term leads to a relaxation of the coefficients $f_i$ to zero, which is the state represented by equilibrium, in which the distribution function $f(t,x,c)$ is in the form of a Maxwellian and characterized by the first three moments $\rho, \vel, \theta$ alone, i.e.,
\begin{equation}
\label{e:Maxwellian}
    f_{\text{Maxwell}}(t,x,c)=\frac{\rho(t,x)}{\sqrt{2\pi\theta(t,x)}}\exp\left(
    -\frac{|c-\vel(t,x)|^2}{2\theta(t,x)} \right).
\end{equation}
For small values $\epsilon(x)$ the coefficients $f_i$ quickly relax to zero and the model is governed by the slowly evolving macroscopic variables $\rho, \vel, \theta$, clearly indicating the different scales.

\subsection{Hermite spectral model}
\label{sec:HSM}
The linearized version of the HME model in \Sect \ref{sec:HME} is called Hermite Spectral Method (HSM), see \cite{Fan2020a,Koellermeier2021}. It can be seen as a discrete velocity scheme using a spectral discretization of the velocity space corresponding to variables $f_i$. This leads to a unknown variable vector $\Var = \left( f_0, \ldots, f_M \right) \in \mathbb{R}^{M+1}$ and results in the system matrix $\Vect{A}_{HSM} \in \mathbb{R}^{(M+1)\times (M+1)}$ defined by
\begin{equation}
\label{e:grad_A_HSM}
\Vect{A}_{HSM} = \left(
  \begin{array}{ccccc}
     & 1 &   &   &   \\
     1 &  & \sqrt{2}  &   &   \\
     & \sqrt{2} &   &  \ddots &   \\
     &  & \ddots  &   & \sqrt{M}  \\
     &  &   & \sqrt{M}  &
  \end{array}
\right).
\end{equation}

The right-hand side vector $\vect{S}\left( \Var \right) \in \mathbb{R}^{M+1}$ for the 1D BGK model uses the projection onto scaled Hermite polynomials $\psi_{\alpha}$ of degree $\alpha$ and is given by
\begin{equation}
\label{e:BGK_term_HSM}
  - \frac{1}{\epsilon(x)} \vect{S}_{\alpha} = \int_{\mathbb{R}} \left( f(t,x,c) - f_{\text{Maxwell}}(t,x,c) \right) \psi_{\alpha}(c) \,dc, ~ \textrm{ for }~ \psi_{\alpha}(c) = \frac{He_{\alpha}(c)}{\sqrt{2^{\alpha} {\alpha}!}},
\end{equation}
where an analytical expression is difficult to obtain, see \cite{Fan2020a,Koellermeier2021}.

In \cite{Koellermeier2021} it was shown that the HSM model contains the same spectrum as a standard discrete velocity model, as commonly used in rarefied gases. This means that the unknown variables relax with relaxation time $\epsilon(x)$ to an equilibrium manifold on which they only evolve with respect to the macroscopic time scale given by the transport of the macroscopic variables. Thus, spectral gaps can be expected for small and/or varying values of $\epsilon(x)$.

\subsection{Spatial discretization}
In this section, we detail the spatial discretization of models of the form \eqref{e:vars_system1D}. We use the notation of polynomial viscosity matrix (PVM) methods as outlined in appendix \ref{app:NC} for standard non-conservative finite volume schemes and non-linear models. For more details on the spatial discretization, we refer to \cite{Castro2008,Pares2006}.


While numerical simulations in \Sect \ref{sec:results} are computed with the full non-linear model, a linearization is necessary to assess the spectral properties of the model and the linear stability properties of the schemes later. The PVM method \eqref{e:scheme} can then be written as
\begin{equation}\label{e:PVM-scheme_const}
    \frac{\Var_i^{n+1}-\Var_i^n}{\Delta t} = -\frac{1}{\Delta x} \left( \Vect{A} \cdot \frac{ \Var_{i+1}^n - \Var_{i-1}^n}{2} + \Vect{Q} \cdot \frac{ - \Var_{i+1}^n + 2 \Var_{i}^n - \Var_{i-1}^n}{2} \right) - \frac{1}{\epsilon(x_i)} \Vect{S} \Var_{i}^n,
\end{equation}
where the terms on the right-hand side are the non-conservative numerical flux, the numerical diffusion, and the potentially stiff source term.

Writing \Eqn \eqref{e:PVM-scheme_const} as a semi-discrete system for the unknown column vector $\vect{W} = \left( \Var_1, \Var_1, \ldots, \Var_{N_x} \right) \in \mathbb{R}^{N_x \cdot N}$ and assuming periodic boundary conditions, the system of equations reads
\begin{equation}\label{e:semi-scheme}
    \frac{\partial \vect{W}}{\partial t} = \left( - \frac{1}{2 \Delta x} \left( \Vect{A} \cdot \left(
  \begin{array}{ccccc}
     & \Vect{I} &   &   & -\Vect{I}  \\
     -\Vect{I} &  & \Vect{I}  &   &   \\
     & -\Vect{I} &   &  \ddots &   \\
     &  & \ddots  &   & \Vect{I}  \\
     \Vect{I} &  &   & -\Vect{I}  &
  \end{array}
\right) + \Vect{Q} \cdot \left(
  \begin{array}{ccccc}
     2\Vect{I} & -\Vect{I} &   &   & -\Vect{I}  \\
     -\Vect{I} & 2\Vect{I} & -\Vect{I}  &   &   \\
     & -\Vect{I} & \ddots  &  \ddots &   \\
     &  & \ddots  & \ddots  & -\Vect{I}  \\
     -\Vect{I} &  &   & -\Vect{I}  & 2\Vect{I}
  \end{array}
\right) \right) - \frac{1}{\epsilon} \Vect{S} \right) \vect{W},
\end{equation}
where the entries are block matrices containing the identity matrix $\Vect{I} \in \mathbb{R}^N$

The system \eqref{e:semi-scheme} can be simplified further to
\begin{equation}\label{e:semi-scheme_simpl}
    \frac{\partial \vect{W}}{\partial t} = \systemA \vect{W},
\end{equation}
with blockwise defined matrix
\begin{equation}\label{e:semi-scheme_simpl_matrix}
    \systemA = \left(
  \begin{array}{ccccc}
     d_0 & b &   &   & c  \\
     c & d_1 & b  &   &   \\
     & c & \ddots  &  \ddots &   \\
     &  & \ddots  & \ddots  & b  \\
     b &  &   & c  & d_{N_x}
  \end{array}
\right),
\end{equation}
that has varying diagonal entries $d_i$ and constant off diagonals $b,c$
\begin{eqnarray}
  d_i &=& -\frac{1}{\Delta x} \Vect{Q} - \frac{1}{\epsilon(x_i)} \Vect{S}, \\
  b &=& \frac{1}{2\Delta x} \left( \Vect{Q} - \Vect{A} \right), \\
  c &=& \frac{1}{2\Delta x} \left( \Vect{Q} + \Vect{A} \right).
\end{eqnarray}

From the definition of the matrix $\systemA$ in \eqref{e:semi-scheme_simpl_matrix} we can see that a large value of the entries in $\Vect{Q}$ makes the matrix more diagonally dominant and leads to a more stable scheme.
The case $\Vect{Q}=0$, which corresponds to the FCTS scheme, discretizing the transport part using simple central finite differences, would even lead to an instable scheme for moderate relaxation times.
Note that the diagonal blocks $a_i$ depend on the relaxation time $\epsilon(x_i)$ evaluated at the respective cell. In the following, we will denote $\epsilon(x_i)= \epsilon_i$.

\subsection{Spatially varying collision rates}
As an example for spatially varying collision rates, we consider two piecewise constant values and write
\begin{equation}\label{e:collision_rate_adaptive}
    \epsilon(x) = \left\{
  \begin{array}{cl}
    \epsilon_L & \textrm{if } x < 0, \\
    \epsilon_R & \textrm{if } x \geq 0, \\
  \end{array} \right.
\end{equation}
which gives rise to denoting $\Vect{W} = \left( \begin{array}{c}
    \Vect{W}_L \\
    \Vect{W}_R \\
  \end{array} \right)$ and the following decomposition of the system into two regions:
\begin{equation}\label{e:decoupled-scheme}
    \frac{\partial}{\partial t} \left( \begin{array}{c}
    \Vect{W}_L \\
    \Vect{W}_R \\
  \end{array} \right) =  \left( \begin{array}{cl}
    \systemA_{LL} & \systemA_{LR} \\
    \systemA_{RL} & \systemA_{RR} \\
  \end{array} \right) \left( \begin{array}{c}
    \Vect{W}_L \\
    \Vect{W}_R \\
  \end{array} \right),
\end{equation}
where $\Vect{W}_L$ and $\Vect{W}_R$ correspond to the values in the left part and right part of the domain, respectively.

In terms of \eqref{e:semi-scheme_simpl_matrix} the respective parts of the system are given as
\begin{equation}\label{e:semi-scheme_matrix_LL_LR}
    \systemA_{LL} = \left(
  \begin{array}{cccc}
     d_L & b &     &   \\
      c & \ddots  &  \ddots &   \\
      & \ddots  & \ddots  & b  \\
       &   & c  & d_L
  \end{array}
\right), \systemA_{LR} = \left(
  \begin{array}{cccc}
      &  &     & c  \\
      &  &     &   \\
     &  &     &   \\
     b &   &   &
  \end{array}
\right),
\end{equation}

\begin{equation}\label{e:semi-scheme_matrix_RL_RR}
    \systemA_{RL} = \left(
  \begin{array}{cccc}
      &  &     & c  \\
      &  &     &   \\
     &  &     &   \\
     b &  &     &
  \end{array}
\right), \systemA_{RR} = \left(
  \begin{array}{cccc}
     d_R & b &    &   \\
     c & \ddots  &  \ddots &   \\
      & \ddots  & \ddots  & b  \\
       &   & c  & d_R
  \end{array}
\right),
\end{equation}
with
\begin{eqnarray}
  d_L &=& -\frac{1}{\Delta x} \Vect{Q} - \frac{1}{\epsilon_L} \Vect{S}, \\
  d_R &=& -\frac{1}{\Delta x} \Vect{Q} - \frac{1}{\epsilon_R} \Vect{S}.
\end{eqnarray}
Note that the only difference in the two block matrices on the diagonal is the value of the source term, which uses either $\epsilon_L$ for $\systemA_{LL}$ or $\epsilon_R$ for $\systemA_{RR}$, respectively. The sparse off-diagonal blocks contain the information of the boundary conditions coupling the left and right part of the domain.\\

The coupled system can be written as two subsystems
\begin{eqnarray}
    \frac{\partial}{\partial t} \Vect{W}_L &=& \systemA_{LL} \cdot \Vect{W}_L + \systemA_{LR} \cdot \Vect{W}_R \label{e:decoupled-systemsL}, \\
    \frac{\partial}{\partial t} \Vect{W}_R &=& \systemA_{RL} \cdot \Vect{W}_L + \systemA_{RR} \cdot \Vect{W}_R \label{e:decoupled-systemsR}.
\end{eqnarray}

Without loss of generality, we assume $\epsilon_L \ll \epsilon_R$ and call the left subsystem \eqref{e:decoupled-systemsL} the stiff system, whereas the right subsystem \eqref{e:decoupled-systemsR} is non-stiff. After choosing a specific spatial discretization, i.e., the PVM method, a stability analysis can be performed.

\begin{remark}
  Note that the distinction between stiff and non-stiff region made by setting the collision rate in \eqref{e:collision_rate_adaptive} can also be much more arbitrary. We can have several, distinct regions with small collision rates $\epsilon_{L}$ or large collision rates $\epsilon_{R}$. A permutation of the variable vector $\Vect{W}$ can be performed to split the system to an upper and lower part with small or large collision rate, respectively. This will lead to more entries in the off-diagonal blocks $\systemA_{RL}, \systemA_{LR}$ and a larger bandwidth in the diagonal blocks $\systemA_{LL}, \systemA_{RR}$. But the analysis can be performed in the same way.
\end{remark}

\subsection{Spectral analysis}
    In this section, we give a concise statement about the spectral properties of the system matrix $\systemA$ \eqref{e:semi-scheme_simpl_matrix} for spatially varying relaxation times. In addition to the assumption of a linearized system, i.e. $\Vect{A} = const$, we use the following assumptions throughout this section

    \begin{itemize}
      \item[(A1)] $\Vect{A}$ is symmetric. This is true for the HSM model \eqref{e:grad_A_HSM}. For the HME model \eqref{e:QBME_A} it requires a linearization around equilibrium and a proper symmetrization \cite{Cai2013b}.
      \item[(A2)] the source term $- \frac{1}{\epsilon_i} \Vect{S}$ is given by a diagonal matrix $- \frac{1}{\epsilon_i} \widetilde{\Vect{I}} := - \frac{1}{\epsilon_i} diag(0,0,0,1,\ldots,1)$, modeling the conservation of mass, momentum and energy and the relaxation of higher order moments. This is true for the BGK operator of the HME model \eqref{e:BGK_term}. For the HSM model \eqref{e:BGK_term_HSM}, it requires a previous redefinition of the variable space \cite{Koellermeier2021}.
    \end{itemize}

    As common for the stability analysis of numerical schemes, the analysis assumes small deviations from some linearized state. Linear stability is then a necessary condition for fully non-linear simulations using the numerical scheme. 
    Note that despite the linearization and the assumptions in this section, numerical simulations of initial value problems for \eqref{e:semi-scheme_simpl} are typically performed using the full non-linear model. 
    The numerical results in \Sect \ref{sec:results} show that the linear stability results can readily be applied to the fully non-linear system as well.

    The main result of this section is the following theorem on the general characterization of the eigenvalues, which is later specified for different spatial discretizations (Upwind, Lax-Friedrichs, FORCE).

    \begin{theorem}
        \label{th:spectrum}
        Under the assumptions (A1),(A2) the spectrum $\sigma\left(\systemA\right)$ for the models described in \Sects \ref{sec:HME} and \ref{sec:HSM} consists of one slow cluster and remaining fast cluster(s) depending on the values of the relaxation time $\epsilon$ evaluated on the grid as

        \begin{equation}\label{e:sigma}
            \sigma\left(\systemA\right) \in C\left(\lambda_s,R\right) \cup \left( \underset{i}{\bigcup} C\left(\lambda_{\epsilon_i},R\right) \right),
        \end{equation}
        with circles $C(\lambda,R)$ in the complex plane centered around $\lambda$ with radius $R$.

        The values $\lambda_s,\lambda_{\epsilon_i},R$ depend on the spatial discretization scheme, i.e., the definition of the PVM matrix $\Vect{Q}$ and the relaxation time as follows
        \begin{eqnarray*}
          \lambda_s &=& - \frac{1}{\Delta x} \lambda\left(\Vect{Q}\right),\\
          \lambda_{\epsilon_i} &=& - \frac{1}{\Delta x} \lambda\left(\Vect{Q}\right) - \frac{1}{\epsilon_i},\\
          R &=& \frac{1}{2 \Delta x} \left( \left|\lambda_{max}\left(\Vect{Q}-\Vect{A}\right) \right| + \left|\lambda_{max}\left(\Vect{Q}+\Vect{A}\right) \right| \right).
        \end{eqnarray*}
    \end{theorem}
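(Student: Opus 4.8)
The plan is to derive the inclusion \eqref{e:sigma} from the block Gershgorin circle theorem applied to $\systemA$ in \eqref{e:semi-scheme_simpl_matrix}, using the cell index as the block index. Because of the block-tridiagonal structure with periodic corners, every block row of $\systemA$ contains precisely one diagonal block $d_i$, one copy of $b$ and one copy of $c$; the periodic wrap-around merely moves these off-diagonal blocks into the corner columns without creating additional ones. The block Gershgorin theorem (Feingold--Varga) then guarantees that each eigenvalue $\mu$ of $\systemA$ satisfies $\mathrm{dist}\big(\mu,\sigma(d_i)\big)\le \|b\|_2+\|c\|_2$ for some $i$, \emph{provided} the diagonal blocks are normal so that the distance to their spectrum is an honest disk radius. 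Assumptions (A1)--(A2) secure this: $\Vect{A}$ is symmetric, the PVM matrix $\Vect{Q}$ is a polynomial function of $\Vect{A}$ and hence symmetric, and $\widetilde{\Vect{I}}$ is diagonal, so every $d_i=-\frac{1}{\Delta x}\Vect{Q}-\frac{1}{\epsilon_i}\widetilde{\Vect{I}}$ is symmetric and thus normal.

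Given this skeleton, the radius is immediate. Since $\Vect{Q}\pm\Vect{A}$ are symmetric, their spectral norms equal their spectral radii, so $\|b\|_2=\frac{1}{2\Delta x}|\lambda_{max}(\Vect{Q}-\Vect{A})|$ and $\|c\|_2=\frac{1}{2\Delta x}|\lambda_{max}(\Vect{Q}+\Vect{A})|$; their sum is exactly the $R$ of the statement, and it is the same for every block row. It therefore only remains to locate the centers, that is, to describe $\sigma(d_i)$.

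To expose the centers I would diagonalise $\Vect{Q}$ by an orthogonal similarity $\Vect{Q}=U\Lambda_Q U^{\top}$ and apply the block-diagonal conjugation $\mathrm{diag}(U^{\top},\dots,U^{\top})$ to $\systemA$. This leaves $R$ unchanged (orthogonal conjugation preserves the $2$-norm of $b$ and $c$) and turns the first term of $d_i$ into the diagonal $-\frac{1}{\Delta x}\Lambda_Q$, whose entries are exactly the slow centers $\lambda_s=-\frac{1}{\Delta x}\lambda(\Vect{Q})$. The source term splits $\mathbb{R}^N$ into the slow subspace $\ker\widetilde{\Vect{I}}$ (the three conserved components), on which the $-\frac{1}{\epsilon_i}$ shift is absent and the eigenvalues stay at $\lambda_s$ independently of $\epsilon_i$ --- giving the single slow cluster $C(\lambda_s,R)$ --- and the fast subspace $\mathrm{range}\,\widetilde{\Vect{I}}$, on which the shift $-\frac{1}{\epsilon_i}$ is added, producing the fast centers $\lambda_{\epsilon_i}=-\frac{1}{\Delta x}\lambda(\Vect{Q})-\frac{1}{\epsilon_i}$ and one fast cluster $C(\lambda_{\epsilon_i},R)$ per distinct grid value of $\epsilon_i$. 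Taking the union over all cells and feeding it into the Gershgorin inclusion yields \eqref{e:sigma}.

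The hard part is making this last splitting rigorous, because for a general PVM matrix $\Vect{Q}$ and $\widetilde{\Vect{I}}$ do not commute (e.g.\ upwind with $\Vect{Q}=|\Vect{A}|$), so $U^{\top}\widetilde{\Vect{I}}U$ is not diagonal and the shifts are not cleanly $0$ or $-\frac{1}{\epsilon_i}$. I expect to handle this either by an exact argument in the commuting case --- most notably Lax--Friedrichs, where $\Vect{Q}\propto\Vect{I}$ commutes with everything and the centers become exact eigenvalues of $d_i$ --- or, in the non-commuting case, by a secondary block Gershgorin estimate on the $2\times2$ slow/fast partition of $d_i$, absorbing the coupling blocks $\tfrac{1}{\Delta x}Q_{sf},\tfrac{1}{\Delta x}Q_{fs}$ into the disk radius and verifying that this contribution is dominated by $R$. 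Confirming that the coupling vanishes or stays within $R$ for each of the discretisations Upwind, Lax--Friedrichs and FORCE is the technical core, and it is exactly what permits the subsequent specialisation of the theorem to those schemes.
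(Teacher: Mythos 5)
Your skeleton is the same as the paper's: the block Gershgorin circle theorem applied row-wise to the block matrix $\systemA$ of \eqref{e:semi-scheme_simpl_matrix}, with the disk centers taken from $\sigma(d_i)$ and the radius $\|b\|+\|c\|$ evaluated via the spectral norm of the symmetric matrices $\frac{1}{2\Delta x}(\Vect{Q}\mp\Vect{A})$; your computation of $R$ is identical to the paper's. The one place you diverge is instructive: the paper obtains the centers by asserting, ``due to the form of $\Vect{S}$'' and with a reference to earlier computations, that three eigenvalues of $d_i=-\frac{1}{\Delta x}\Vect{Q}-\frac{1}{\epsilon_i}\widetilde{\Vect{I}}$ sit at $-\frac{1}{\Delta x}\lambda(\Vect{Q})$ and the rest at $-\frac{1}{\Delta x}\lambda(\Vect{Q})-\frac{1}{\epsilon_i}$, whereas you correctly observe that this is exact only when $\Vect{Q}$ and $\widetilde{\Vect{I}}$ commute (Lax--Friedrichs, where $\Vect{Q}\propto\Vect{I}$), and that for upwind or FORCE the slow/fast splitting of $d_i$ carries a coupling term that must be estimated, e.g.\ by a secondary block Gershgorin bound on the $2\times2$ slow/fast partition. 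That extra step is not in the paper's proof; carrying it out (and checking the coupling is absorbed by $R$, or enlarging $R$ accordingly) would make the argument strictly more rigorous than the published one for the non-commuting discretizations. So: same route, same radius, and your flagged ``technical core'' is precisely the point the paper passes over by citation rather than by proof.
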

    \begin{proof}
        According to the Gershgorin circle theorem \cite{Tretter2008}, all eigenvalues of a block-wise defined matrix $\systemA$ of the type in \Eqn \eqref{e:semi-scheme_simpl_matrix} are included in the following domains

        \begin{equation}\label{e:Gershgorin}
            G_i = \sigma(a_i) \cup \left( \underset{i}{\bigcup} C\left(\lambda_{d_i}, \| b \| + \| c \|\right) \right),
        \end{equation}
        where $a_i,b_i,c_i$ are the block matrices in the $i$-th row of the matrix $\systemA$. The norm $\| \cdot \|$ is the spectral norm which evaluates to the absolute value of the maximum eigenvalue, for the symmetric matrices used here.

        For the computation of the respective eigenvalues, we make use of the fact that for any holomorphic function $f$, the eigenvalues $\lambda(f(\Vect{A}))$ are simply given by $f(\lambda(\Vect{A}))$. This will be especially useful for the PVM methods, for which the viscosity matrix $\Vect{Q}$ is a function of $\Vect{A}$.

        The eigenvalues of $a_i = -\frac{1}{\Delta x} \Vect{Q} - \frac{1}{\epsilon_i} \Vect{S}$ cannot be obtained without prior knowledge of either $\Vect{Q}$ or $\Vect{S}$. However, they have been computed for several explicit moment models in \cite{Lafitte2010}. Using only assumption (A2) about the form of the relaxation matrix $\Vect{S}$, see also \Eqn \eqref{e:BGK_term}, a subset of three slow eigenvalues is given as $\lambda_{1,2,3}$. Due to the form of $\Vect{S}$, see also \Eqn \eqref{e:BGK_term}, those eigenvalues are among the eigenvalues of $\lambda_{s} = -\frac{1}{\Delta x} \lambda(\Vect{Q})$ and the remaining eigenvalues are given by $\lambda_{\epsilon_i} = -\frac{1}{\Delta x} \lambda(\Vect{Q}) - \frac{1}{\epsilon_i}$.

        The maximum eigenvalue of $b$ can be computed using $\| b \| = \frac{1}{2\Delta x} \left| \lambda_{max}\left( \Vect{Q} - \Vect{A} \right) \right|$ and the insertion of the viscosity matrix $\Vect{Q}$ as a function of $\Vect{A}$. The computation of $\| c \| = \frac{1}{2\Delta x} \left| \lambda_{max}\left( \Vect{Q} + \Vect{A} \right) \right|$ follows in the same way.
    \end{proof}

    Theorem \ref{th:spectrum} states that there is one main slow cluster and several fast clusters that are determined by the relaxation times. This result is similar to the case of discrete velocity models in \cite{Lafitte2017,Lafitte2010,Melis2019}. However, the simple and explicit form of the model allows for a straightforward characterization of the spectrum without moving to Fourier space first. This will be clear in the following sections, where we apply Theorem \ref{th:spectrum} to the Upwind, Lax-Friedrichs, and FORCE scheme.

\subsubsection{Upwind scheme}
    The Upwind scheme uses minimal viscosity $\Vect{Q} = \left|\Vect{A}\right|$ \eqref{e:upwind_PVM}, such that the values $\lambda_s,\lambda_{\epsilon_i},R$ in Theorem \ref{th:spectrum} are computed
    \begin{equation}\label{e:upwind_lambda_s}
        \lambda_s =  - \frac{1}{\Delta x} \lambda\left(\Vect{Q}\right) = - \frac{\left|\lambda\left(\Vect{A}\right)\right|}{\Delta x},
    \end{equation}
    \begin{equation}\label{e:upwind_lambda_i}
        \lambda_i =  - \frac{1}{\Delta x} \lambda\left(\Vect{Q}\right) - \frac{1}{\epsilon_i} = - \frac{\left|\lambda\left(\Vect{A}\right)\right|}{\Delta x}  - \frac{1}{\epsilon_i},
    \end{equation}
    \begin{eqnarray}
      R &=& \frac{1}{2 \Delta x} \left( \left|\lambda_{max}\left(\Vect{Q}-\Vect{A}\right) \right| + \left|\lambda_{max}\left(\Vect{Q}+\Vect{A}\right) \right| \right) = \frac{1}{2 \Delta x} \left( \left|\lambda_{max}\left(\left|\Vect{A}\right|-\Vect{A}\right) \right| + \left|\lambda_{max}\left(\left|\Vect{A}\right|+\Vect{A}\right) \right| \right) \nonumber \\
       &=& \frac{1}{2 \Delta x} \left( 2 \left|\lambda_{min}\left(\Vect{A}\right) \right| + 2 \left|\lambda_{max}\left(\Vect{A}\right) \right| \right) \leq \frac{1}{2 \Delta x} 2 \left|\lambda_{max}\left(\Vect{A}\right) \right| = \frac{\lambda_{max}\left(\Vect{A}\right)}{\Delta x}.
    \end{eqnarray}
    for positive $\lambda_{max}\left(\Vect{A}\right)>0$.

    For the upwind scheme, the slow cluster centered at $\lambda_s$ depends on the propagation speed of the model and the spatial grid, while the fast clusters $\lambda_i$ include also the relaxation of faster values with the respective relaxation time evaluated at the grid. The radius $R$ of the clusters depends on the maximal eigenvalues and the grid.

\subsubsection{Lax-Friedrichs scheme}
    For the Lax-Friedrichs scheme, the values $\lambda_s,\lambda_{\epsilon_i},R$ in Theorem \ref{th:spectrum} are computed using $\Vect{Q} = \frac{\Delta x}{\Delta t} \Vect{I}$ \eqref{e:LF_PVM} and the time step size is given by a macroscopic CFL condition $\Delta t = CFL \cdot \frac{\Delta x}{\lambda_{max}\left(\Vect{A}\right)}$ as follows

    \begin{equation}\label{e:LF_lambda_s}
        \lambda_s =  - \frac{1}{\Delta t} = - \frac{\lambda_{max}\left(\Vect{A}\right)}{CFL \cdot \Delta x},
    \end{equation}
    \begin{equation}\label{e:LF_lambda_i}
        \lambda_i = - \frac{1}{\Delta t} - \frac{1}{\epsilon_i} = - \frac{\lambda_{max}\left(\Vect{A}\right)}{CFL \cdot \Delta x} - \frac{1}{\epsilon_i},
    \end{equation}
    \begin{eqnarray}
      R &=& \frac{1}{2 \Delta x} \left( \left|\lambda_{max}\left(\frac{\Delta x}{\Delta t} \Vect{I}-\Vect{A}\right) \right| + \left|\lambda_{max}\left(\frac{\Delta x}{\Delta t} \Vect{I}+\Vect{A}\right) \right| \right) \\
        &=& \frac{1}{2 \Delta x} \left( \left| \frac{\lambda_{max}\left(\Vect{A}\right)}{CFL} - \lambda_{max}\left(\Vect{A}\right) \right| + \left| \frac{\lambda_{max}\left(\Vect{A}\right)}{CFL} + \lambda_{max}\left(\Vect{A}\right) \right| \right)\\
       &=& \frac{1}{2 \Delta x} \left( \left(\frac{1}{CFL} - 1\right) \lambda_{max}\left(\Vect{A}\right) + \left(\frac{1}{CFL} + 1\right) \lambda_{max}\left(\Vect{A}\right) \right) = \frac{\lambda_{max}\left(\Vect{A}\right)}{CFL \cdot \Delta x},
    \end{eqnarray}
    for positive $\lambda_{max}\left(\Vect{A}\right)>0$.

    The Lax-Friedrichs scheme shows similar dependence of $\lambda_s, \lambda_i, R$ on the model parameters compared to the upwind scheme. However, the $CFL$ number enters in the denominator, which has an important effect on the stability of the scheme later.

\subsubsection{FORCE scheme}
    For the FORCE scheme, the values $\lambda_s,\lambda_{\epsilon_i},R$ in Theorem \ref{th:spectrum} can be computed using $\Vect{Q} = \frac{\Delta x}{2\Delta t} \Vect{I} + \frac{\Delta t}{2\Delta x} \Vect{A}^2$ \eqref{e:FORCE_PVM} and the time step size is given by a macroscopic CFL condition $\Delta t = CFL \cdot \frac{\Delta x}{\lambda_{max}\left(\Vect{A}\right)}$ as follows

    \begin{equation}\label{e:FORCE_lambda_s}
        \lambda_s =  - \frac{1}{2\Delta t} - \frac{\Delta t}{2\Delta x} \lambda\left(\Vect{A}\right)^2 \geq - \frac{\lambda_{max}\left(\Vect{A}\right)}{2 \Delta x} \left(\frac{1}{CFL} + CFL\right),
    \end{equation}
    \begin{equation}\label{e:FORCE_lambda_i}
        \lambda_i = - \frac{1}{2\Delta t} - \frac{\Delta t}{2\Delta x} \lambda\left(\Vect{A}\right)^2 - \frac{1}{\epsilon_i} \geq - \frac{\lambda_{max}\left(\Vect{A}\right)}{2 \Delta x} \left(\frac{1}{CFL} + CFL\right) - \frac{1}{\epsilon_i},
    \end{equation}
    \begin{eqnarray}
      R &=& \frac{1}{2 \Delta x} \left( \left|\lambda_{max}\left(\frac{\Delta x}{2\Delta t} \Vect{I} + \frac{\Delta t}{2\Delta x} \Vect{A}^2 - \Vect{A}\right) \right| + \left|\lambda_{max}\left(\frac{\Delta x}{2\Delta t} \Vect{I} + \frac{\Delta t}{2\Delta x} \Vect{A}^2 + \Vect{A}\right) \right| \right) \nonumber \\
       &=& \frac{1}{2 \Delta x} \left( \left(\frac{1}{2 CFL} + \frac{CFL}{2} - 1\right) \lambda_{max}\left(\Vect{A}\right) + \left(\frac{1}{2 CFL} + \frac{CFL}{2} + 1\right) \lambda_{max}\left(\Vect{A}\right) \right) \nonumber \\
       &=& \frac{\lambda_{max}\left(\Vect{A}\right)}{2 \Delta x} \left(\frac{1}{CFL} + CFL\right).
    \end{eqnarray}
    for positive $\lambda_{max}\left(\Vect{A}\right)>0$.

    The FORCE scheme results in slightly more complicated formulas to compute $\lambda_s, \lambda_i, R$. This is mainly due to the definition of the polynomial viscosity matrix \eqref{e:FORCE_PVM}. Note that the FORCE scheme yields the same results as the upwind scheme for $CFL=1$.

\subsubsection{Numerical validation of spatial discretizations' spectral properties}
    In \Fig \ref{fig:Spectrum} the results of Theorem \ref{th:spectrum} are validated using numerical values of the actual spectrum. For the numerical computation, we used the HME model \eqref{e:QBME_A} with $M+1=5$ equations, linearized around equilibrium with $(\rho,u,\theta)=(1,\pi,1)$, such that the maximum eigenvalue evaluates to $\lambda_{max}\approx 6$. The spatial discretization is performed on the grid $[-1,1]$ with $\Delta x = 1/50$, i.e. 100 cells. A spatially varying relaxation time is chosen according to piecewise constant values \eqref{e:collision_rate_adaptive}
    \begin{equation}\label{e:spatially_varying_eps}
        \epsilon(x) = \left\{
        \begin{array}{cl}
            \epsilon_L = 10^{-4}& \textrm{if } x < 0, \\
            \epsilon_R = 10^{-3}& \textrm{if } x \geq 0. \\
        \end{array} \right.
    \end{equation}
    The $CFL$ number is set to $0.75$.

    In all three cases, we can see that the regions proposed in Theorem \ref{th:spectrum} correctly include all eigenvalues of the scheme. This includes the split into three regions. The slow cluster contains the macroscopic evolution governed by the flow speeds $\lambda_{max}$. The second cluster is a fast cluster governed by the relaxation time $\epsilon_R$. The third and fastest cluster is governed by the relaxation time $\epsilon_L$. On the one hand, the methods slightly differ in the value of $R$, the radius of the three clusters, which is influenced by the $CFL$ number. We obtain $R_{upwind} < R_{FORCE} < R_{LF}$ (note the different scaling of the $y$-axis). On the other hand, also the position of the clusters is slightly different with a similar relation $\lambda_{upwind} > \lambda_{FORCE} > \lambda_{LF}$. Large negative real parts of the eigenvalues result in a decay in time. This means that the upwind scheme is the least diffusive while the Lax-Friedrichs scheme introduces a lot of diffusion.

    \begin{figure}[htb!]
        \centering
        \begin{subfigures}
        \subfloat[Upwind scheme. \label{fig:spectrum_upwind}
        ]{\includegraphics[width=1\linewidth]{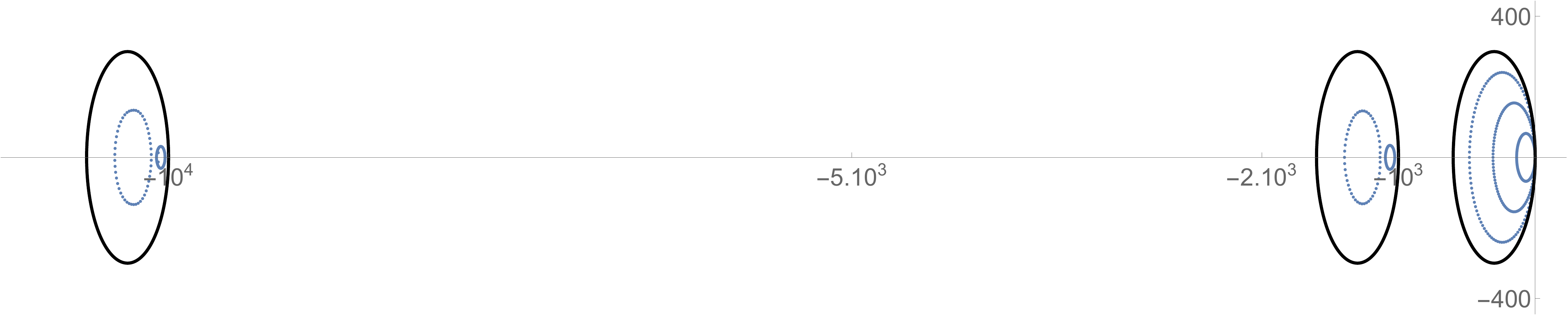}}\\
        \subfloat[Lax-Friedrichs scheme. \label{fig:spectrum_LF}
        ]{\includegraphics[width=1\linewidth]{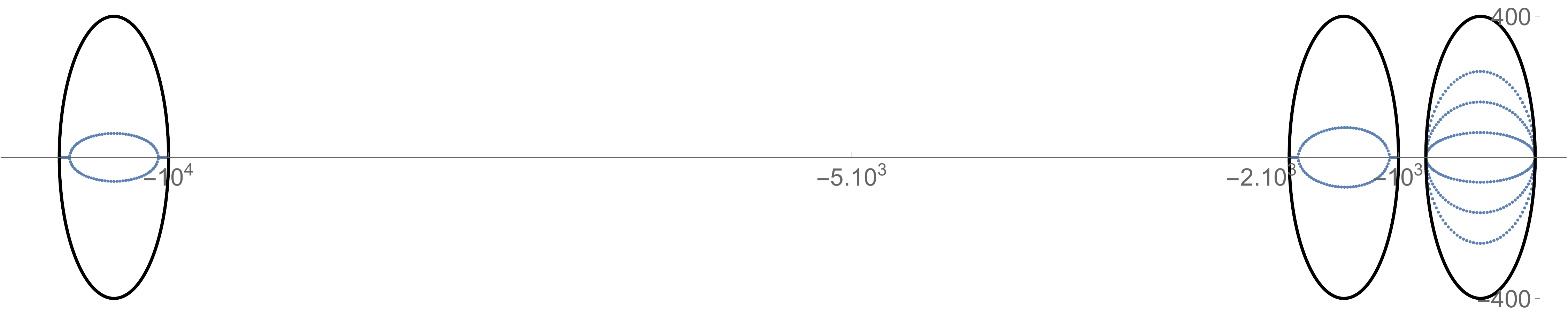}}\\
        \subfloat[FORCE scheme. \label{fig:spectrum_FORCE}
        ]{\includegraphics[width=1\linewidth]{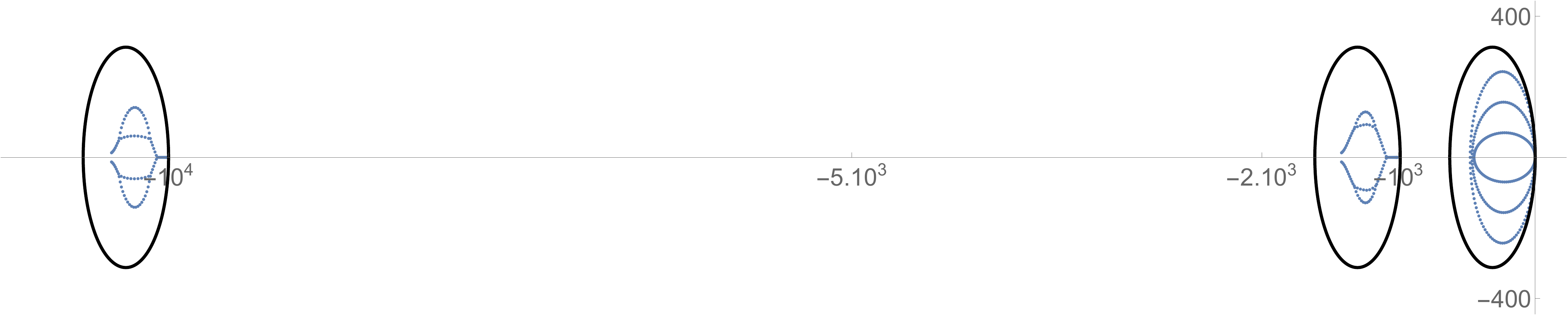}}
        \end{subfigures}
        \caption{Numerical eigenvalues and analytical spectra according to theorem \ref{th:spectrum} match for the different schemes: Upwind (top), LF (middle) and FORCE (bottom). Numerical eigenvalues are plotted in blue, Gershgorin circles from Theorem \ref{th:spectrum} are drawn in black. The model is HME \eqref{e:QBME_A} with $M+1=5$ equations, linearized around equilibrium with $(\rho,u,\theta)=(1,\pi,1)$, i.e. $\lambda_{max}\approx 6$, $\epsilon_L=10^{-4},\epsilon_L=10^{-3}$, $\Delta x = 1/50$, $CFL = 0.75$. }
        \label{fig:Spectrum}
    \end{figure}     
\section{Standard time integration schemes}
\label{sec:standard_schemes}
After a detailed investigation of the spectral properties of the semi-discrete model, we now investigate how the model can be integrated in time in a stable way, which is the main focus of this paper. First we consider standard time integration schemes, before deriving new and more suitable adaptive schemes. We consider the general setup with spatially varying but piecewise constant relaxation times according to \Eqn \eqref{e:spatially_varying_eps}.

As standard time integration schemes we consider all schemes that cannot take into account the spatial variation of the relaxation time. This typically leads to a severe time step constraint, as we will show throughout this section for the simple forward Euler scheme (FE). The Projective Forward Euler scheme (PFE) already mitigates time step constraint of the fastest eigenvalue cluster, but does not benefit from potentially slower eigenvalues in other parts of the domain.

\subsection{Forward Euler scheme (FE)}
\label{sec:FE}
The simple forward Euler scheme performs one explicit time step using a time step size $\Delta t$, as outlined in \Fig \ref{fig:FE_grid}. The update is given by
\begin{figure}[htb!]
    \centering
    \includegraphics[width=0.75\textwidth]{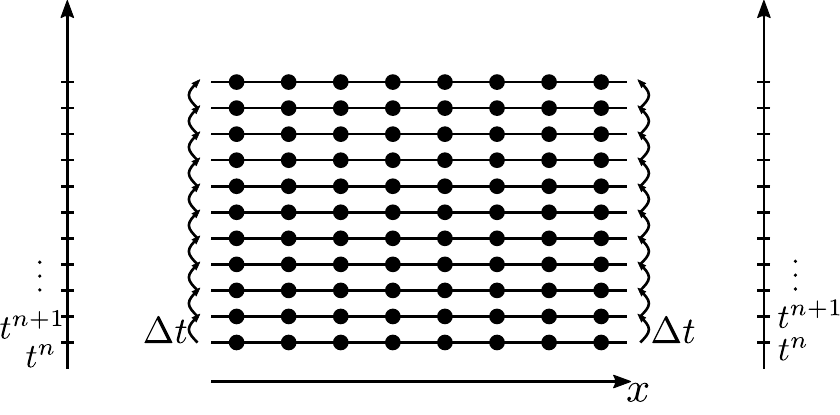}
    \caption{Forward Euler scheme (FE) with small time step $\Delta t = \mathcal{O}(\epsilon)$ in the whole domain.}
    \label{fig:FE_grid}
\end{figure}

\begin{equation}\label{e:FE_update}
    \Vect{W}^{n+1} = \Vect{W}^{n} + \Delta t \systemA \Vect{W}^{n} = \left(\Vect{I} + \Delta t \systemA\right) \Vect{W}^{n}
\end{equation}
where the matrix $\transitionA_{FE} = \Vect{I} + \Delta t \systemA$ is the so-called transition matrix, that describes the transition from the current values $\Vect{W}^{n}$ to $\Vect{W}^{n+1}$.


The stability domain of the FE scheme, based on the model equation $\partial_t w = \lambda w$, with $\lambda \in \mathbb{C}^-$ is shown in \Fig \ref{fig:FE_stab_region} and given by
\begin{equation}\label{e:FE_stab_region}
    \lambda \in C\left( - \frac{1}{\Delta t}, \frac{1}{\Delta t} \right)
\end{equation}

\begin{figure}[htb!]
        \centering
        \begin{subfigures}
        \subfloat[Stability region of Forward Euler scheme (FE). The whole domain uses one time step $\Delta t$ that determines the stability of all modes. \label{fig:FE_stab_region}
        ]{\includegraphics[width=0.7\linewidth]{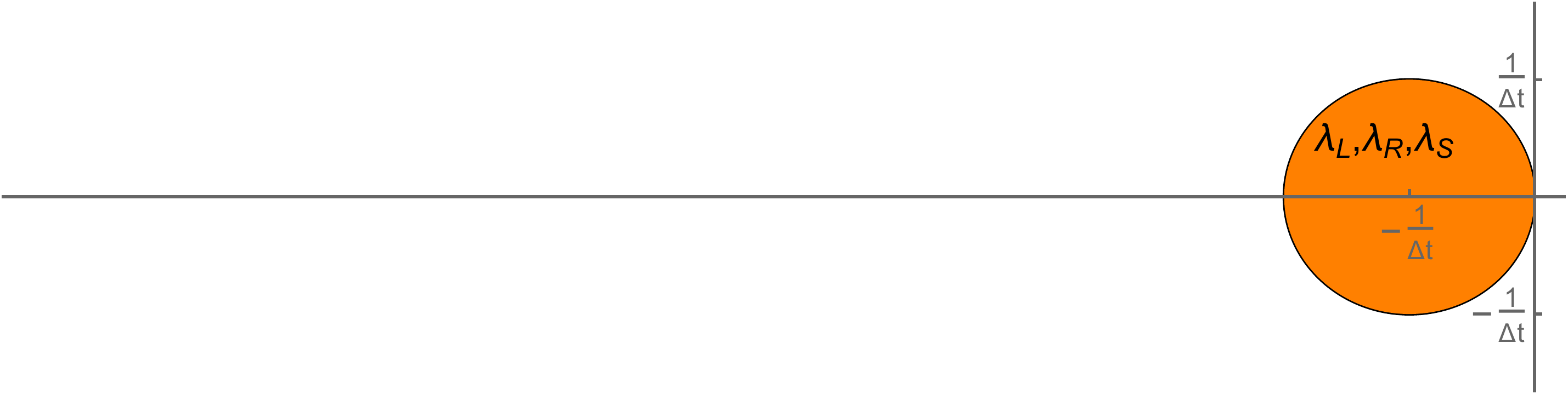}}\\
        \subfloat[Stability region of Projective Forward Euler scheme (PFE). The whole domain uses one inner time step $\delta t$ for fast modes and one time step $\Delta t$ for the other modes. \label{fig:PFE_stab_region}
        ]{\includegraphics[width=0.7\linewidth]{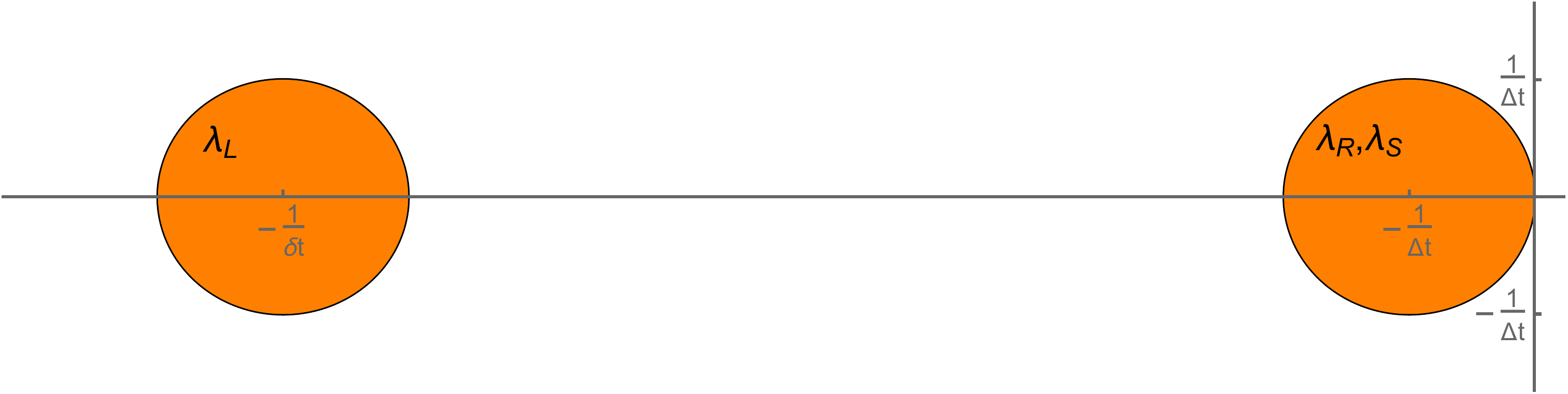}}
        \end{subfigures}
        \caption{Stability regions of FE and PFE time integration schemes. The respective eigenvalues need to be located within the specified domains for stability of the scheme. }
        \label{fig:stab_regions}
\end{figure}

The spectral analysis reveals the respective bounds on the time step size, depending on the spatial discretization, the $CFL$ number, and the relaxation times $\epsilon_{L} \ll \epsilon_R$ from \Eqn \eqref{e:spatially_varying_eps}.
Including the whole spectrum for the model analyzed in Theorem \ref{th:spectrum} within the stability domain of the FE scheme, we obtain the following stability condition
\begin{equation}\label{e:FE_stab_cond}
    \frac{1}{\Delta t} \geq \frac{1}{2} \left( |\lambda_{\epsilon}| + R \right)
\end{equation}

Inserting $\Delta t = CFL \frac{\Delta x}{\lambda_{max}}$ and known values of $\lambda_{\epsilon}$ and $R$ for the different schemes, yields the following stability conditions:
\begin{itemize}
  \item[1.] the upwind scheme is conditionally stable for $CFL \leq \frac{2\epsilon_L \lambda_{max}}{\Delta x + 2\epsilon_L \lambda_{max}}$.
  \item[2.] the Lax-Friedrichs scheme is unconditionally unstable for all $CFL$ values.
  \item[3.] the FORCE scheme is conditionally stable for $CFL \leq -\frac{\Delta x}{2 \epsilon_L \lambda_{max}} + \sqrt{\frac{\Delta x}{2 \epsilon_L \lambda_{max}}^2+1}$.
\end{itemize}
We conclude that both the upwind and the FORCE scheme are only stable under a very small $CFL$ number that is of the order $\mathcal{O}(\epsilon)$, while the Lax-Friedrichs scheme is unconditionally unstable and cannot be stabilized even by a small $CFL$ number. The severe time step constraint for upwind and FORCE is prohibitive in many applications and more suitable methods needs to be used.

Plotting the eigenvalues of the transition matrix with an upwind discretization in \Fig \ref{fig:FE_transition_stab_region} shows that the method is indeed stable and the stability bounds are relatively sharp as larger values of $\Delta t$ or $CFL$, respectively, would lead to an unstable scheme.
\begin{figure}[htb!]
        \centering
        \begin{subfigures}
        \subfloat[Forward Euler. \label{fig:FE_transition_stab_region}
        ]{\includegraphics[width=0.46\linewidth]{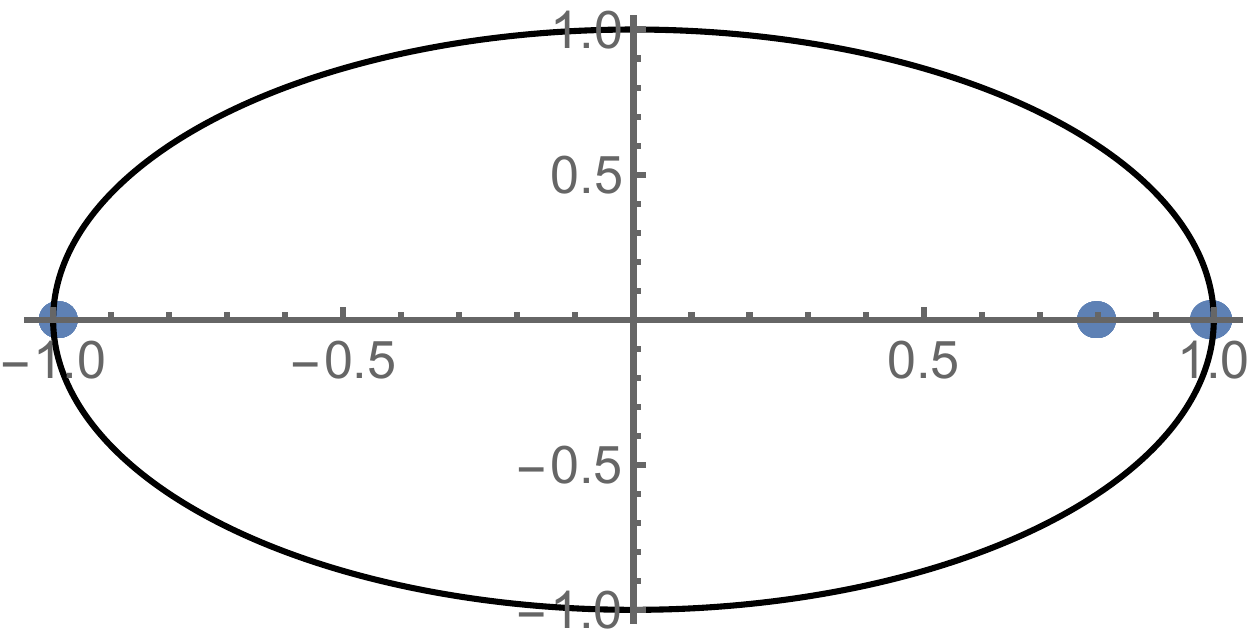}}
        \subfloat[Projective Forward Euler. \label{fig:PFE_transition_stab_region}
        ]{\includegraphics[width=0.46\linewidth]{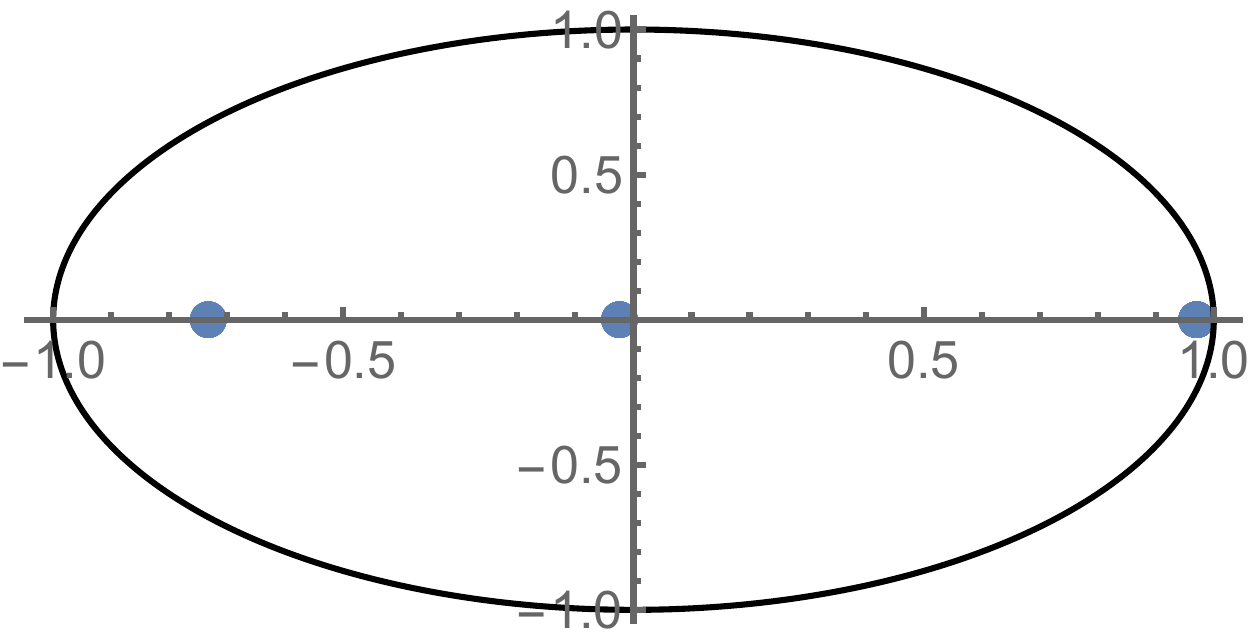}}
        \end{subfigures}
        \caption{Numerical spectrum of the transition matrix $\transitionA$ for Forward Euler (left) and Projective Forward Euler (right). Both schemes are stable if parameters are chosen according to the derived analytical values, while the estimates are relatively sharp as eigenvalues are close to stability boundary. Upwind spatial discretization, $(\rho,u,\theta)=(1,\pi,1)$, i.e. $\lambda_{max}\approx 6$, $\epsilon_L=10^{-4},\epsilon_L=10^{-3}$, $\Delta x = 1/10$. }
        \label{fig:transition_stab_regions}
\end{figure}

\subsection{Projective Forward Euler scheme (PFE)}
\label{sec:PFE}
The Projective Forward Euler scheme (PFE) is an explicit, asymptotic-preserving scheme that combines $K+1$ small time steps of size $\delta t$ with an extrapolation step over the remaining $\Delta t - (K+1) \delta t$ to achieve the value at the next time step, as outlined in \Fig \ref{fig:PFE_grid}
\begin{figure}[htb!]
    \centering
    \includegraphics[width=0.75\textwidth]{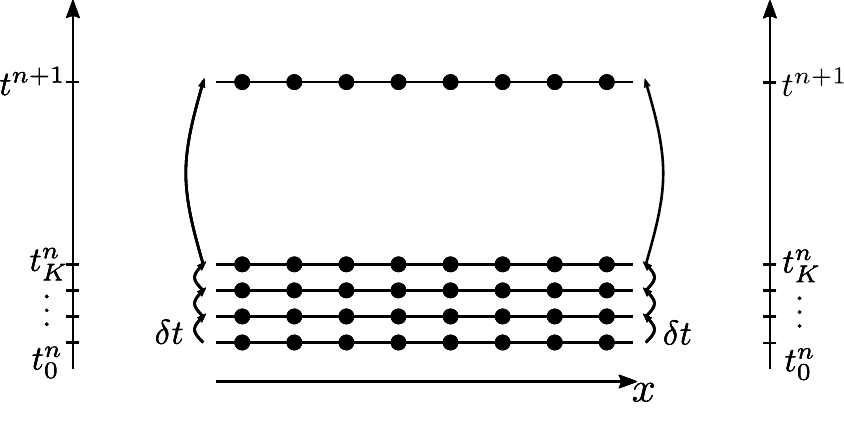}
    \caption{Projective Forward Euler scheme (PFE) with $K=2$ and small time step $\delta t = \mathcal{O}(\epsilon)$ in the whole domain.}
    \label{fig:PFE_grid}
\end{figure}

The update is computed as follows
\begin{eqnarray}
    \Vect{W}^{n,k+1} &=& \Vect{W}^{n,k} + \delta t \systemA \cdot \Vect{W}^{n,k} , k = 0,\ldots,K\\
    \Vect{W}^{n+1} &=& \Vect{W}^{n,K+1} + \left(\Delta t - (K+1) \delta t\right) \frac{\Vect{W}^{n,K+1} - \Vect{W}^{n,K}}{\delta t},
\end{eqnarray}

The stability domain of the PFE scheme, again based on the model equation $\partial_t w = \lambda w$, with $\lambda \in \mathbb{C}^-$ is shown in \Fig \ref{fig:PFE_stab_region} and given by
\begin{equation}\label{e:PFE_stab_region}
    \lambda \in C\left( - \frac{1}{\Delta t}, \frac{1}{\Delta t} \right) \cup C\left( - \frac{1}{\delta t}, \frac{1}{\delta t}\left(\frac{\delta t}{\delta t}\right)^{\frac{1}{K}} \right)
\end{equation}

Using the spectral analysis of the previous section, we can again derive the respective bounds on the parameters $\delta t$,$\Delta t$, and $K$ depending on the spatial discretization, the $CFL$ number, and the relaxation times $\epsilon_{L,R}$.
In order to include the whole spectrum for the model analyzed in Theorem \ref{th:spectrum} within the stability domain of the PFE scheme, we consider the constant relaxation time case $\epsilon = const$. We then determine the parameters based on \Eqn \eqref{e:PFE_stab_region} and Theorem \ref{th:spectrum} as
\begin{eqnarray}
  \frac{1}{\Delta t} &=& -\lambda_{s} \label{e:PFE_stab_cond1}\\
  \frac{1}{\delta t} &=& -\lambda_{\epsilon} \label{e:PFE_stab_cond2}\\
  \frac{1}{\delta t}\left(\frac{\delta t}{\Delta t}\right)^{\frac{1}{K}} &\geq& R\label{e:PFE_stab_cond3}
\end{eqnarray}

Inserting $\Delta t = CFL \frac{\Delta x}{\lambda_{max}}$ and known values of $\lambda_{\epsilon}$ and $R$ for the different schemes, yields:
\begin{itemize}
  \item[1.] the upwind scheme is conditionally stable for $\delta t = \frac{1}{\frac{\lambda_{max}}{\Delta x} + \frac{1}{\epsilon}} = \mathcal{O}(\epsilon)$, $K=1$, and  $CFL \leq 1$.
  \item[2.] the Lax-Friedrichs scheme is conditionally stable for $\delta t = \frac{1}{\frac{\lambda_{max}}{CFL \Delta x} + \frac{1}{\epsilon}} = \mathcal{O}(\epsilon)$, $K=1$, and  $CFL \leq 1$.
  \item[3.] the FORCE scheme is conditionally stable for $\delta t = \frac{1}{\frac{\lambda_{max}}{\Delta x}\left(\frac{1}{CFL} + CFL\right)  + \frac{1}{\epsilon}} = \mathcal{O}(\epsilon)$, $K=1$, and  $CFL \leq 1$.
\end{itemize}

Interestingly, the Lax-Friedrichs scheme is stable in comparison to the FE scheme. Note that the value $K=1$ is chosen here for convenience. Other values are possible and extend the stability region towards the slow cluster, see \cite{Melis2019}.

The eigenvalues of the transition matrix $\transitionA_{PFE}$ with an upwind spatial discretization and parameters according to the aforementioned stability conditions are plotted in \Fig \ref{fig:PFE_transition_stab_region}. Again, all eigenvalues are inside the unit circle and we conclude that the method is indeed stable for the parameter settings predicted by our analysis. The eigenvalues $\lambda_i$ are close to the stability boundary $\|\lambda_i \| < 1$, which indicates that both the estimates of the spectrum of the model equation and the stability properties of the scheme are relatively sharp.

The PFE scheme overcomes the restrictive time step constraint of the FE scheme in case of small relaxation times. It does not, however, make use of potential spatially varying relaxation times. If the relaxation time is only small in some parts of the domain, an adaptive method needs to be chosen for larger speedup, which will be explained in the next section.     
\section{Spatially adaptive time integration schemes }
\label{sec:adaptive_schemes}

We now need to construct time-stepping methods with matching stability region. Therefore, we consider a special treatment of the stiff and non-stiff parts of the domain.

More precisely, we consider the transition from time step $n$ to time step $n+1$ and write the update as
\begin{equation}\label{decoupled-scheme}
        \left( \begin{array}{c}
        \Vect{W}_L^{n+1} \\
        \Vect{W}_R^{n+1} \\
      \end{array} \right) =
      \left( \begin{array}{cc}
        \transitionA_{LL}^{scheme} & \transitionA_{LR}^{scheme} \\
        \transitionA_{RL}^{scheme} & \transitionA_{RR}^{scheme} \\
      \end{array} \right)
       \left( \begin{array}{c}
        \Vect{W}_L^{n} \\
        \Vect{W}_R^{n} \\
      \end{array} \right),
\end{equation}
introducing a scheme specific block-wise transition matrix $\transitionA^{scheme} = \left( \begin{array}{cc}
        \transitionA_{LL}^{scheme} & \transitionA_{LR}^{scheme} \\
        \transitionA_{RL}^{scheme} & \transitionA_{RR}^{scheme} \\
      \end{array} \right)$.

The small relaxation time $\epsilon_L$ in the stiff part of the system leads to a severe time step constraint. In order to design a tailored numerical integration scheme for the decoupled system, we employ a different time integration scheme in each domain. In the non-stiff domain, a standard forward Euler scheme with time step size $\Delta t$ is applied. In the stiff domain, a different scheme is necessary.
%
We derive the following new schemes
\begin{itemize}
  \item[AFE:] Stiff domain: Forward Euler scheme; Non-stiff domain: Forward Euler scheme
  \item[APFE:] Stiff domain: Projective Forward Euler scheme; Non-stiff domain: Forward Euler scheme
  \item[APPFE:] Stiff domain: Projective Forward Euler scheme; Non-stiff domain: Projective Forward Euler scheme
\end{itemize}
We denote the methods as Adaptive Forward Euler scheme (AFE), Adaptive Projective Forward Euler scheme (APFE) and Adaptive Projective Projective Forward Euler scheme (APFE), respectively.

\subsection{Adaptive Forward Euler scheme (AFE)}
\label{sec:AFE}
We first consider a standard forward Euler scheme with a smaller time step $\delta t$ in the stiff region, while using a large time step $\Delta t$ in the non-stiff region. For simplicity, we only consider the case $\Delta t = (K+1) \delta t$ with integer $K \in \mathbb{N}$. The scheme is outlined in figure \ref{fig:AFE_grid}.

\begin{figure}[htb!]
    \centering
    \includegraphics[width=0.75\textwidth]{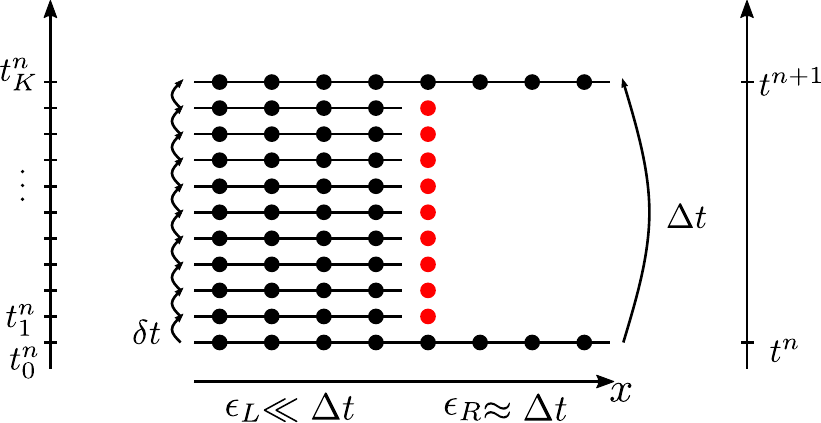}
    \caption{Adaptive forward Euler scheme (AFE) with small time step $\delta t$ in stiff region (left) and large time step $\Delta t$ in non-stiff region (right). Values of red cells at the boundary of the two domains need to be reconstructed.}
    \label{fig:AFE_grid}
\end{figure}

The updates from the values $\Vect{W}^n$ to $\Vect{W}^{n+1}$ are thus performed in the following way
\begin{equation}\label{FE_update_R}
    \Vect{W}_R^{n+1} = \Vect{W}_R^{n} + \Delta t \left( \systemA_{RL} \cdot \Vect{W}_L^{n} + \systemA_{RR} \cdot \Vect{W}_R^{n} \right),
\end{equation}
for the non-stiff part of the domain using a forward Euler step with time step size $\Delta t$, see \Eqn \eqref{e:FE_update}, and
\begin{eqnarray}\label{FE_update_L}
    \Vect{W}_L^{n,k+1} &=& \Vect{W}_L^{n,k} + \delta t \left( \systemA_{LL} \cdot \Vect{W}_L^{n,k} + \systemA_{LR} \cdot \Vect{W}_R^{n,k} \right), k = 0,\ldots,K\\
    \Vect{W}_L^{n+1} &=& \Vect{W}_L^{n,K+1},
\end{eqnarray}
for the stiff part of the domain using a forward Euler step with time step size $\delta t$ and initialisations $\Vect{W}_L^{n,0} = \Vect{W}_L^{n}$, $\Vect{W}_R^{n,0} = \Vect{W}_R^{n}$. The intermediate values $\Vect{W}_R^{n,k}$ needed from the non-stiff part are computed via interpolation, i.e.,
\begin{eqnarray}\label{interpolation_R}
    \Vect{W}_R^{n,k+1} &=& \Vect{W}_R^{n} + (k+1) \cdot \delta t \cdot \frac{\Vect{W}_R^{n+1}-\Vect{W}_R^{n}}{\Delta t}, \\
                       &=& (k+1) \delta t \systemA_{LR} \Vect{W}_L^{n} + (\Vect{I}+ \delta t (k+1) \systemA_{RR} \Vect{W}_R^{n},
\end{eqnarray}
where the sparse form of the off-diagonal parts $\systemA_{LR}$ and $\systemA_{RL}$ allows for an efficient computation of the interpolation only at the interface.

\begin{theorem}\label{th:AFE}
    One time step of the AFE method with time step size $\Delta t$ in the non-stiff domain and time step size $\delta t$ in the stiff domain, for $\Delta t = (K+1) \delta t$ is given by the transition matrix $\transitionA^{AFE}$ with block entries
    \begin{eqnarray*}
        \transitionA^{AFE}_{LL} &=& \left( \Vect{I} + \delta t \systemA_{LL} \right)^{K+1} + \delta t^2 \sum_{k=0}^{K} (K-k) \left( \Vect{I} + \delta t \systemA_{LL} \right)^{k} \systemA_{LR} \systemA_{RL}\\
        \transitionA^{AFE}_{LR} &=& \delta t \sum_{k=0}^{K} \left( \Vect{I} + \delta t \systemA_{LL} \right)^{k} \systemA_{LR} \left( \Vect{I} + (K-k)\delta t \systemA_{RR} \right)\\
        \transitionA^{AFE}_{RL} &=& \Delta t \systemA_{RL} \\
        \transitionA^{AFE}_{RR} &=& \Vect{I} + \Delta t \systemA_{RR}.
    \end{eqnarray*}
\end{theorem}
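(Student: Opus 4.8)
The plan is to unroll the inner forward--Euler iteration in the stiff domain and read off the coefficients of $\Vect{W}_L^n$ and $\Vect{W}_R^n$ in the final update $\Vect{W}_L^{n+1} = \Vect{W}_L^{n,K+1}$. The two bottom blocks are immediate: the non-stiff update \eqref{FE_update_R} is a single explicit step, so $\Vect{W}_R^{n+1} = \Delta t\,\systemA_{RL}\Vect{W}_L^n + (\Vect{I} + \Delta t\,\systemA_{RR})\Vect{W}_R^n$, which gives $\transitionA^{AFE}_{RL} = \Delta t\,\systemA_{RL}$ and $\transitionA^{AFE}_{RR} = \Vect{I} + \Delta t\,\systemA_{RR}$ directly. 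It remains to compute the two top blocks.

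The first step for the top blocks is to express every intermediate right state as an explicit linear function of the initial data. Inserting \eqref{FE_update_R} into the interpolation \eqref{interpolation_R} and using $\Delta t = (K+1)\delta t$ yields $\Vect{W}_R^{n,k} = k\,\delta t\,\systemA_{RL}\Vect{W}_L^n + (\Vect{I} + k\,\delta t\,\systemA_{RR})\Vect{W}_R^n$ for $k = 0,\ldots,K$, consistent with $\Vect{W}_R^{n,0} = \Vect{W}_R^n$. Writing $M := \Vect{I} + \delta t\,\systemA_{LL}$, the stiff iteration \eqref{FE_update_L} becomes the affine recursion $\Vect{W}_L^{n,k+1} = M\,\Vect{W}_L^{n,k} + \delta t\,\systemA_{LR}\Vect{W}_R^{n,k}$, whose forcing is now a known linear function of $(\Vect{W}_L^n, \Vect{W}_R^n)$.

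The core step is to solve this recursion by discrete variation of parameters (a discrete Duhamel principle): iterating from $k=0$ to $K$ gives
\[
\Vect{W}_L^{n,K+1} = M^{K+1}\Vect{W}_L^n + \delta t\sum_{k=0}^{K} M^{K-k}\,\systemA_{LR}\,\Vect{W}_R^{n,k}.
\]
Substituting the closed form of $\Vect{W}_R^{n,k}$ and separating the coefficients of $\Vect{W}_L^n$ and $\Vect{W}_R^n$ produces $\transitionA^{AFE}_{LL} = M^{K+1} + \delta t^2\sum_{k=0}^{K} k\,M^{K-k}\,\systemA_{LR}\systemA_{RL}$ and $\transitionA^{AFE}_{LR} = \delta t\sum_{k=0}^{K} M^{K-k}\,\systemA_{LR}(\Vect{I} + k\,\delta t\,\systemA_{RR})$. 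A final reindexing $k \mapsto K-k$ in both sums (which replaces $M^{K-k}$ by $M^{k}$ and the multiplier $k$ by $K-k$) brings these expressions into the stated form.

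The only genuine subtlety, the rest being bookkeeping, is the feedback captured by the $\systemA_{LR}\systemA_{RL}$ term in $\transitionA^{AFE}_{LL}$: it arises because the interpolated right states $\Vect{W}_R^{n,k}$ already depend on $\Vect{W}_L^n$ through $\systemA_{RL}$, so the stiff substeps feed the left data back onto itself across the interface. Keeping this dependence, rather than treating $\Vect{W}_R^{n,k}$ as a pure $\Vect{W}_R^n$ contribution, and correctly matching the powers of $M$ with the linear interpolation weight $k\,\delta t$ through the reindexing, is where care is needed.
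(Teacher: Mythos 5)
Your proposal is correct and follows the same route the paper intends: the paper's proof is a one-line instruction to insert \eqref{FE_update_R}, \eqref{FE_update_L}, and \eqref{interpolation_R} and read off the blocks, and you have simply carried out that insertion explicitly (unrolling the affine recursion, substituting the interpolated right states, and reindexing), arriving at expressions that match both the theorem and the paper's $K=1$ example. No gap; you even silently correct the $\systemA_{LR}$-versus-$\systemA_{RL}$ typo in the paper's displayed interpolation formula.
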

\begin{proof}
    The resulting blocks of the transition matrix \eqref{decoupled-scheme} are obtained by insertion of the non-stiff entries via \eqref{FE_update_R} and the stiff entries \eqref{FE_update_L} together with the boundary interpolation via \eqref{interpolation_R}.
\end{proof}

As an example, we consider $K=1$, such that $\Delta t = 2\delta t$. Theorem \ref{th:AFE} then leads to the following transition matrix:
\begin{equation}
    \transitionA^{AFE} = \left( \begin{array}{cc}
        \left( \Vect{I} + \delta t \systemA_{LL} \right)^2 + \delta t^2 \systemA_{LR} \systemA_{RL}& 2 \delta t \systemA_{LR} + \delta t^2 \left(\systemA_{LR} \systemA_{RR} + \systemA_{LL} \systemA_{LR}\right)\\
        \Delta t \systemA_{RL} & \Vect{I} + \Delta t \systemA_{RR}
      \end{array} \right),
\end{equation}
which can be written as
\begin{equation}\label{e:AFE_ex}
     \transitionA^{AFE} = \Vect{I} + \Delta t \left( \begin{array}{cc}
        \systemA_{LL} & \systemA_{LR} \\
        \systemA_{RL} & \systemA_{RR} \\
      \end{array} \right) + \frac{\Delta t^2}{4}\left( \begin{array}{cc}
        \systemA_{LL}^2 + \systemA_{LR} \systemA_{RL} & \systemA_{LL} \systemA_{LR} + \systemA_{LR} \systemA_{RL}\\
        \Vect{0} & \Vect{0} \\
      \end{array} \right)
\end{equation}

Comparing \Eqn \eqref{e:AFE_ex} with a Taylor expansion of the exact solution of \Eqn \eqref{e:decoupled-scheme} around $W^n$, i.e., $W(t+\Delta t)= W^n + \Delta t \systemA W^n + \frac{\Delta t^2}{2} \systemA^2 W^n + \mathcal{O}\left(\Delta t^3\right)$, it is clear that the scheme has an error of $\|W^{n+1}- W(t+\Delta t)\|=\mathcal{O}(\Delta t^2)$, such that it is first order accurate in time.

The stability analysis of the scheme is not based on the scalar model equation $\partial_t W = \lambda W$, with $\lambda \in \mathbb{C}^-$, but on the following two-dimensional model
\begin{equation}\label{e:model_eqn}
  \partial_t \left( \begin{array}{c}
        W_L \\
        W_R \\
      \end{array} \right) =
      \left( \begin{array}{cc}
        \lambda_L & 0 \\
        0 & \lambda_R \\
      \end{array} \right)
       \left( \begin{array}{c}
        W_L \\
        W_R \\
      \end{array} \right),
\end{equation}
for two variables $W_L,W_R$ following two scales $\lambda_L,\lambda_R \in \mathbb{C}^-$, reflecting spatially varying relaxation times.

This leads to the following transition matrix for the model equation \eqref{e:model_eqn}
\begin{equation}\label{e:AFE_stab_matrix}
    \transitionA^{AFE} = \left( \begin{array}{cc}
        \left(1+\delta t \lambda_L\right)^{K+1} & 0 \\
        0 & 1+\Delta t \lambda_R \\
      \end{array} \right)
\end{equation}

The stability domain of the AFE scheme, derived using the transition matrix from \Eqn \eqref{e:AFE_stab_matrix} $\|\transitionA^{AFE}\| \leq 1$ is then given by
\begin{equation}\label{e:AFE_stab_region}
    \lambda_L \in C\left( - \frac{1}{\delta t}, \frac{1}{\delta t} \right) \textrm{ and } \lambda_R \in C\left( - \frac{1}{\Delta t}, \frac{1}{\Delta t} \right),
\end{equation}
and shown in \Fig \ref{fig:AFE_stab_region}.

\begin{figure}[htb!]
        \centering
        \begin{subfigures}
        \subfloat[Stability region of Adaptive Forward Euler scheme (AFE). Stiff domain uses small time step $\delta t$ for all modes and non-stiff domain uses larger $\Delta t$ for all modes. \label{fig:AFE_stab_region}
        ]{\includegraphics[width=0.7\linewidth]{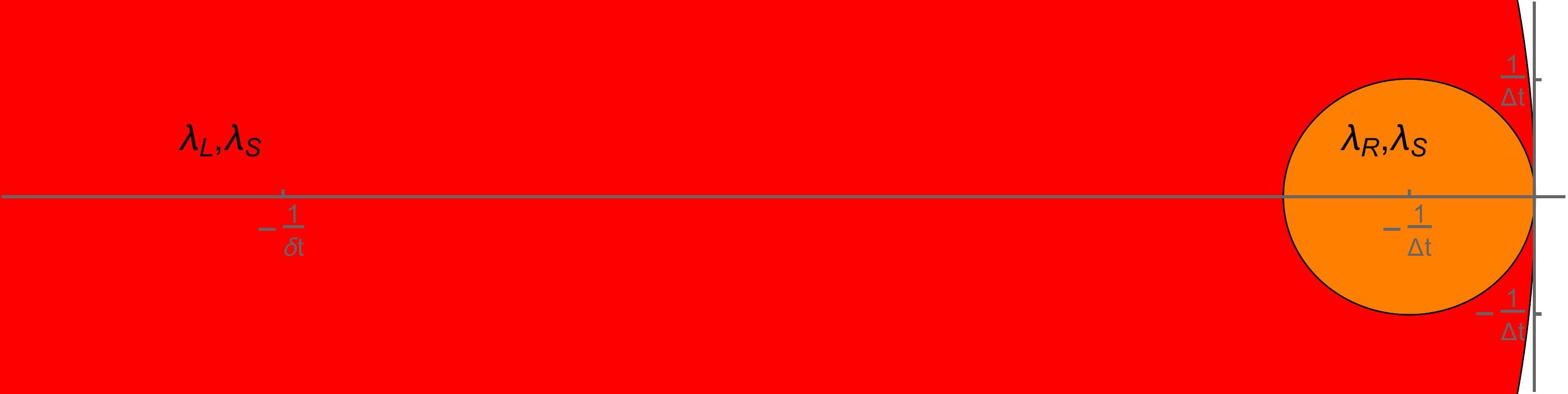}}\\
        \subfloat[Stability region of Adaptive Projective Forward Euler scheme (APFE). Stiff domain uses one inner time step $\delta t$ for fast modes and one time step $\Delta t$ for the other modes. Non-stiff domain uses single time step $\Delta t$ for all modes. \label{fig:APFE_stab_region}
        ]{\includegraphics[width=0.7\linewidth]{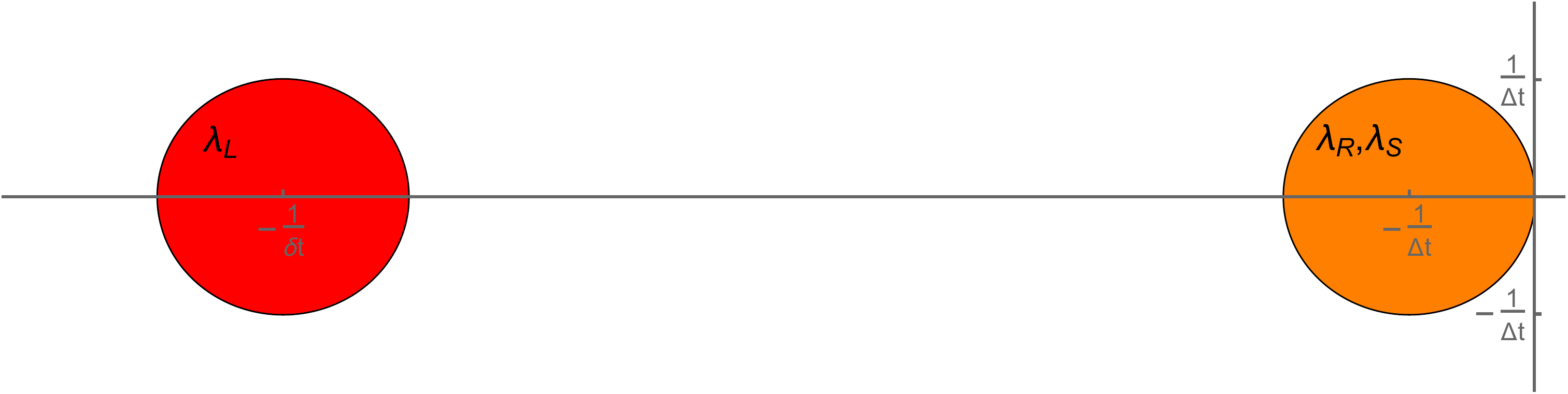}}\\
        \subfloat[Stability region of Adaptive Projective Projective Forward Euler scheme (APPFE). Both the stiff domain and the non-stiff domain each use one inner time step $\delta t_{L,R}$ for fast modes and one time step $\Delta t$ for the other modes. \label{fig:APPFE_stab_region}
        ]{\includegraphics[width=0.7\linewidth]{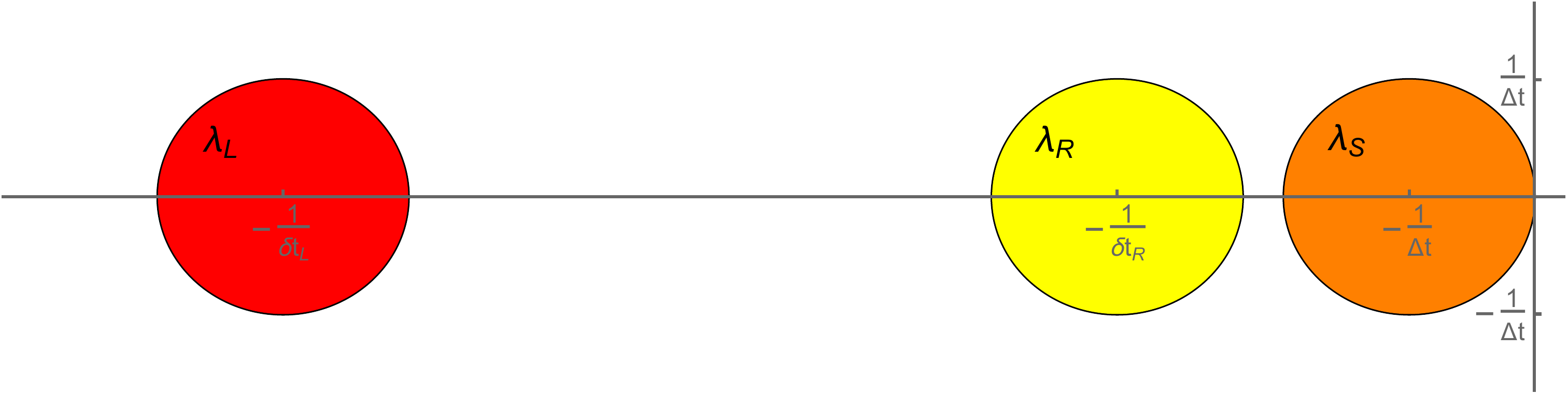}}
        \end{subfigures}
        \caption{Stability regions of adaptive time integration schemes. The respective eigenvalues need to be located within the specified domains for stability of the scheme. 
        Orange denotes stability region for the whole domain. Red denotes separate stability region for the stiff domain. Yellow denotes separate stability region of non-stiff domain.}
        \label{fig:stab_regions_adaptive}
\end{figure}

Using the spectral analysis of the previous section, we can derive the respective bounds on the parameters $\delta t$ and $\Delta t$ depending on the spatial discretization, the $CFL$ number, and the relaxation times $\epsilon_{L,R}$.
In order to include the whole spectrum for the model analyzed in Theorem \ref{th:spectrum} within the stability domain of the AFE scheme, we now consider a spatially varying relaxation time with discrete values $\epsilon_L \ll \epsilon_R$. We then determine the parameters based on \Eqn \eqref{e:AFE_stab_region} and Theorem \ref{th:spectrum} as
\begin{eqnarray}
  \frac{1}{\delta t} &\geq& \frac{1}{2}\left(-\lambda_{\epsilon_L} + R \right) \label{e:AFE_stab_cond1}\\
  \frac{1}{\delta t} &\geq& R \label{e:AFE_stab_cond2} \\
  \frac{1}{\Delta t} &\geq& \frac{1}{2}\left(-\lambda_{\epsilon_R} + R \right) \label{e:AFE_stab_cond3}\\
  \frac{1}{\Delta t} &\geq& R \label{e:AFE_stab_cond4}
\end{eqnarray}

Inserting $\Delta t = CFL \frac{\Delta x}{\lambda_{max}}$ and the known values of $\lambda_{\epsilon_L}, \lambda_{\epsilon_R}$ and $R$ for the different schemes yields:
\begin{itemize}
  \item[1.] the upwind scheme is conditionally stable for $\delta t = \frac{1}{\frac{\lambda_{max}}{\Delta x} + \frac{1}{2\epsilon_L}} = \mathcal{O}(\epsilon_L)$, and  $CFL \leq \frac{1}{\frac{\Delta x}{2\epsilon_R \lambda_{max}}+1}$.
  \item[2.] the Lax-Friedrichs scheme is unconditionally unstable because the intermediate cluster cannot be integrated in a stable way.
  \item[3.] the FORCE scheme is conditionally stable for $\delta t = \frac{1}{\frac{\lambda_{max}}{2\Delta x}\left(\frac{1}{CFL} + CFL\right)  + \frac{1}{2\epsilon_L}} = \mathcal{O}(\epsilon_L)$, and  $CFL \leq - \frac{\Delta x}{2\epsilon_R \lambda_{max}} + \sqrt{\frac{\Delta x}{2\epsilon_R \lambda_{max}}^2+1}$.
\end{itemize}

Note that in comparison to the FE scheme \ref{sec:FE}, only the small time step size $\delta t$ is used to resolve the stiff domain corresponding to $\epsilon_L$, whereas the rest of the domain can use a standard time step $\Delta t$ given by $CFL = \mathcal{O}(1)$ for a larger $\epsilon_R$. However, the Lax-Friedrichs scheme is still unstable and a $CFL$ condition remains for the other schemes. In addition, many steps with $\delta t$ need to be performed in the stiff region.

The eigenvalues of the transition matrix $\transitionA_{AFE}$ with an upwind spatial discretization and parameters according to the aforementioned stability conditions are plotted in \Fig \ref{fig:AFE_transition_stab_region}. Again, all eigenvalues are inside the unit circle and we conclude that the method is indeed stable for the parameter settings predicted by our analysis. The eigenvalues $\lambda_i$ are very close to the stability boundary $\|\lambda_i \| < 1$, which indicates that both the estimates of the spectrum of the model equation and the stability properties of the scheme are relatively sharp.

\begin{figure}[htb!]
        \centering
        \begin{subfigures}
        \subfloat[Adaptive Forward Euler. \label{fig:AFE_transition_stab_region}
        ]{\includegraphics[width=0.46\linewidth]{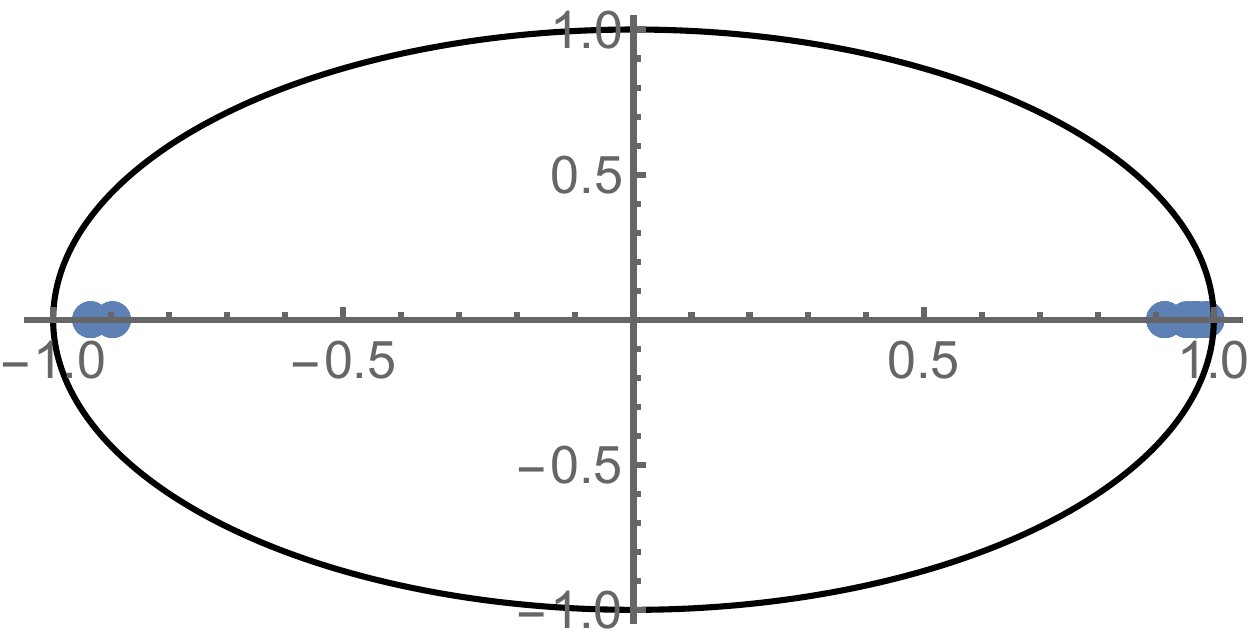}}
        \subfloat[Adaptive Projective Forward Euler. \label{fig:APFE_transition_stab_region}
        ]{\includegraphics[width=0.46\linewidth]{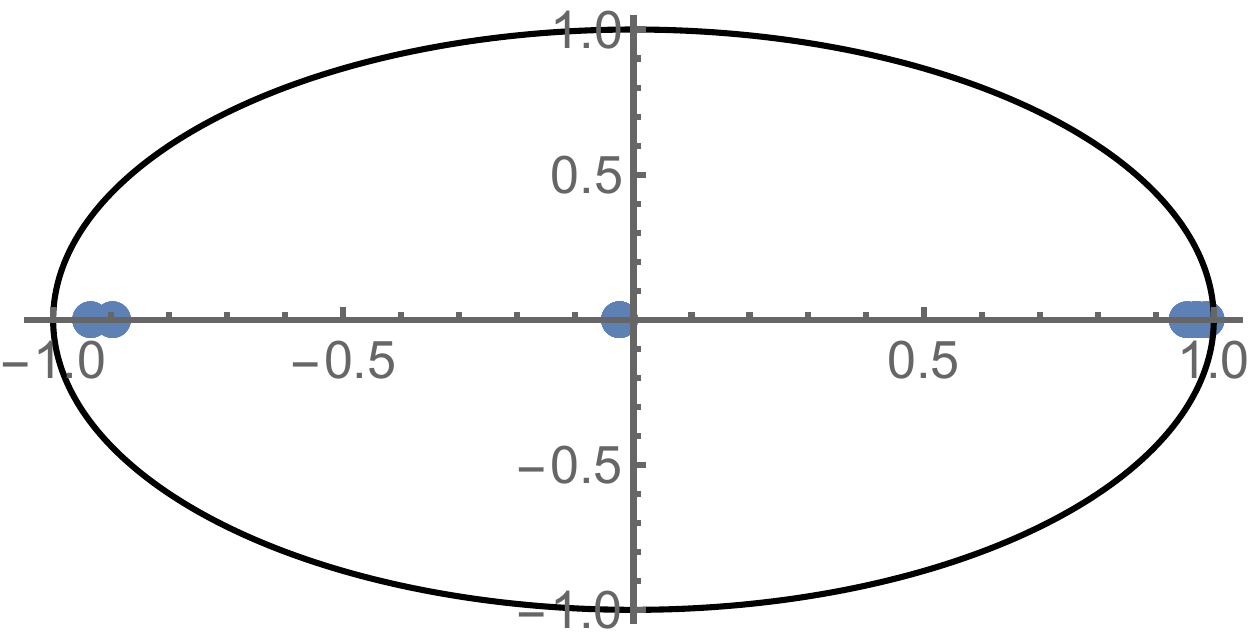}}
        \end{subfigures}
        \caption{Numerical spectrum of the transition matrix $\transitionA$ for Adaptive Forward Euler (left) and Adaptive Projective Forward Euler (right). Both schemes are stable if parameters are chosen according to the derived analytical values, while the estimates are relatively sharp as eigenvalues are close to stability boundary. Upwind spatial discretization, $(\rho,u,\theta)=(1,\pi,1)$, i.e. $\lambda_{max}\approx 6$, $\epsilon_L=10^{-4},\epsilon_L=10^{-3}$, $\Delta x = 1/10$. }
        \label{fig:adaptive_transition_stab_regions}
\end{figure}

\subsection{Adaptive Projective Forward Euler scheme (APFE)}
\label{sec:APFE}
We keep a standard forward Euler scheme with large time step $\Delta t$ in the non-stiff region but employ a Projective Forward Euler scheme with $K$ inner Forward Euler steps of smaller time step $\delta t \ll \Delta t$ in the stiff domain. The idea is outlined in \Fig \ref{fig:APFE_grid}.
\begin{figure}[htb!]
    \centering
    \includegraphics[width=0.75\textwidth]{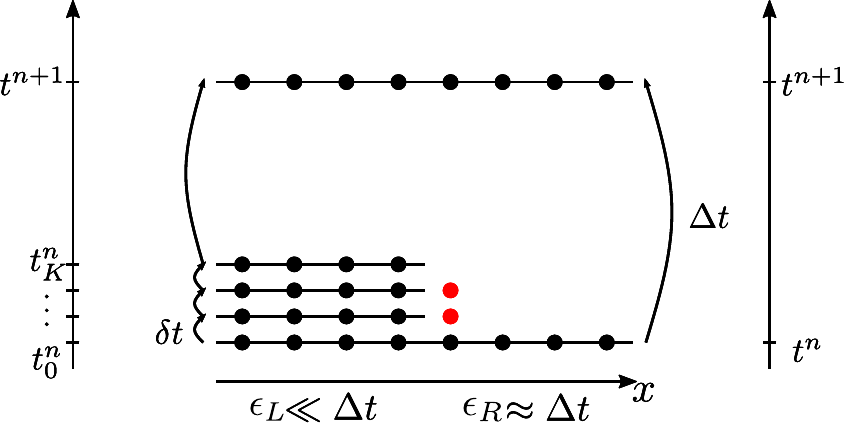}
    \caption{Adaptive projective forward Euler scheme (APFE) with $K$ inner small time steps $\delta t$ in stiff region (left) and large time step $\Delta t$ in non-stiff region (right). Values of red cells at the boundary of the two domains need to be reconstructed.}
    \label{fig:APFE_grid}
\end{figure}

The update and interpolation of the non-stiff values from \Eqns \eqref{FE_update_R} and \eqref{interpolation_R} are the same. The Projective Forward Euler scheme uses $K$ inner Forward Euler steps and subsequent extrapolation over the remaining time interval
\begin{eqnarray}\label{AFE_update_L}
    \Vect{W}_L^{n,k+1} &=& \Vect{W}_L^{n,k} + \delta t \left( \systemA_{LL} \cdot \Vect{W}_L^{n,k} + \systemA_{LR} \cdot \Vect{W}_R^{n,k} \right), k = 0,\ldots,K\\
    \Vect{W}_L^{n+1} &=& \Vect{W}_L^{n,K+1} + \left(\Delta t - (K+1) \delta t\right) \frac{\Vect{W}_L^{n,K+1} - \Vect{W}_L^{n,K}}{\delta t},
\end{eqnarray}

\begin{theorem}\label{th:APFE}
    One time step of the APFE method with time step size $\Delta t$ in the non-stiff domain and $K$ time steps of size $\delta t$ with subsequent extrapolation in the stiff domain is given by the transition matrix $\transitionA^{APFE}$ with block entries
    \begin{eqnarray*}
        \transitionA^{APFE}_{LL} &=& \left( \Vect{I} + (\Delta t - K \delta t) \systemA_{LL} \right) \left( \delta t^2 \sum_{k=0}^{K-1} (K-1-k) \left( \Vect{I} + \delta t \systemA_{LL} \right)^k \systemA_{LR} \systemA_{RL} + \left( \Vect{I} + \delta t \systemA_{LL} \right)^K \right) \\
                      & & + (\Delta t -K \delta t) \systemA_{LR} K \delta t \systemA_{RL}  \\
        \transitionA^{APFE}_{LR} &=& \left( \Vect{I} + (\Delta t - K \delta t) \systemA_{LL} \right) \delta t \sum_{k=0}^{K-1} \left( \Vect{I} + \delta t \systemA_{LL} \right)^k \systemA_{LR} \left( \Vect{I} + (K-1-k)\delta t \systemA_{RR} \right) \\
                      & & + (\Delta t -K \delta t) \systemA_{LR} \left( \Vect{I} + K \delta t \systemA_{RR} \right) \\
        \transitionA^{APFE}_{RL} &=& \Delta t \systemA_{RL} \\
        \transitionA^{APFE}_{RR} &=& \Vect{I} + \Delta t \systemA_{RR}.
    \end{eqnarray*}
\end{theorem}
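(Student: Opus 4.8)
The plan is to unroll the coupled iteration explicitly and then collect the coefficients of $\Vect{W}_L^n$ and $\Vect{W}_R^n$ to read off the four blocks. First I would treat the inner stiff recursion \eqref{AFE_update_L}, which in the form $\Vect{W}_L^{n,k+1} = (\Vect{I}+\delta t\systemA_{LL})\Vect{W}_L^{n,k} + \delta t\systemA_{LR}\Vect{W}_R^{n,k}$ is a linear inhomogeneous difference equation. Its discrete variation-of-constants solution is
\begin{equation*}
\Vect{W}_L^{n,k} = (\Vect{I}+\delta t\systemA_{LL})^{k}\Vect{W}_L^{n} + \delta t\sum_{j=0}^{k-1}(\Vect{I}+\delta t\systemA_{LL})^{k-1-j}\systemA_{LR}\Vect{W}_R^{n,j},
\end{equation*}
which is verified by a one-line induction on $k$, keeping the matrix factors strictly in this left-to-right order since the blocks do not commute.

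Next I would eliminate the interpolated right states. Inserting the non-stiff update \eqref{FE_update_R} into the interpolation \eqref{interpolation_R} gives $\Vect{W}_R^{n,j} = j\delta t\,\systemA_{RL}\Vect{W}_L^{n} + (\Vect{I}+j\delta t\,\systemA_{RR})\Vect{W}_R^{n}$, so every intermediate right state is already linear in the two unknowns $\Vect{W}_L^n,\Vect{W}_R^n$. Substituting this into the solution above at $k=K$ and at $k=K+1$ expresses $\Vect{W}_L^{n,K}$ and $\Vect{W}_L^{n,K+1}$ purely in terms of $\Vect{W}_L^n$ and $\Vect{W}_R^n$.

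The key simplification is the extrapolation step in \eqref{AFE_update_L}. Writing $\beta = \Delta t-(K+1)\delta t$ and using $\Vect{W}_L^{n,K+1} = (\Vect{I}+\delta t\systemA_{LL})\Vect{W}_L^{n,K}+\delta t\systemA_{LR}\Vect{W}_R^{n,K}$, I would show that the extrapolation collapses into a single effective Euler step of size $\Delta t-K\delta t$ applied to the $K$-th inner state, namely $\Vect{W}_L^{n+1} = (\Vect{I}+(\Delta t-K\delta t)\systemA_{LL})\Vect{W}_L^{n,K} + (\Delta t-K\delta t)\systemA_{LR}\Vect{W}_R^{n,K}$. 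Here the coefficient of $\Vect{W}_L^{n,K}$ simplifies because $(1+\beta/\delta t)(\Vect{I}+\delta t\systemA_{LL})-(\beta/\delta t)\Vect{I}=\Vect{I}+(\delta t+\beta)\systemA_{LL}$ with $\delta t+\beta = \Delta t-K\delta t$, and likewise the coefficient of $\Vect{W}_R^{n,K}$ becomes $(\delta t+\beta)\systemA_{LR}=(\Delta t-K\delta t)\systemA_{LR}$.

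Finally I would insert the explicit expressions for $\Vect{W}_L^{n,K}$ and $\Vect{W}_R^{n,K}$ into this effective step and read off the coefficient of $\Vect{W}_L^n$ as $\transitionA^{APFE}_{LL}$ and of $\Vect{W}_R^n$ as $\transitionA^{APFE}_{LR}$; the reindexing $j\mapsto K-1-k$ turns $\sum_{j}j(\Vect{I}+\delta t\systemA_{LL})^{K-1-j}$ into $\sum_{k}(K-1-k)(\Vect{I}+\delta t\systemA_{LL})^{k}$, matching the stated formulas, while the two bottom blocks $\transitionA^{APFE}_{RL}=\Delta t\systemA_{RL}$ and $\transitionA^{APFE}_{RR}=\Vect{I}+\Delta t\systemA_{RR}$ are read directly off \eqref{FE_update_R}. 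The main obstacle is bookkeeping rather than any genuine difficulty: keeping the non-commuting blocks in the correct left-to-right order throughout the unrolling and the extrapolation collapse, and splitting the term $(\Delta t-K\delta t)\systemA_{LR}\Vect{W}_R^{n,K}$ correctly so that it contributes the isolated pieces $(\Delta t-K\delta t)\systemA_{LR}K\delta t\systemA_{RL}$ and $(\Delta t-K\delta t)\systemA_{LR}(\Vect{I}+K\delta t\systemA_{RR})$ to $\transitionA^{APFE}_{LL}$ and $\transitionA^{APFE}_{LR}$ respectively.
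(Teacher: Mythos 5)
Your proposal is correct and follows essentially the same route as the paper, whose proof is simply the one-line instruction to insert the non-stiff update, the stiff inner iteration, and the boundary interpolation into the block transition matrix; you carry out exactly that substitution, just spelled out via the discrete variation-of-constants formula and the collapse of the extrapolation into an effective Euler step of size $\Delta t - K\delta t$. All intermediate identities check out (including your silent correction of the $\systemA_{LR}$ vs.\ $\systemA_{RL}$ typo in the paper's interpolation formula), so nothing further is needed.
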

\begin{proof}
    The resulting blocks of the transition matrix \eqref{decoupled-scheme} are obtained by insertion of the non-stiff entries via \eqref{FE_update_R} and the stiff entries \eqref{AFE_update_L} together with the boundary interpolation via \eqref{interpolation_R}.
\end{proof}

As an example, we consider $K=1$, which is often used for PFE schemes. Theorem \ref{th:APFE} then leads to the following transition matrix: $\transitionA^{APFE} = $
\begin{equation}
    \left( \begin{array}{cc}
        \left( \Vect{I} + (\Delta t - \delta t) \systemA_{LL} \right) \left( \Vect{I} + \delta t \systemA_{LL} \right) + \delta t (\Delta t - \delta t) \systemA_{LR} \systemA_{RL} & (\Delta t - \delta t) \left( \systemA_{LR} + \delta t \systemA_{LR} \systemA_{RR} \right) \\
        \Delta t \systemA_{RL} & \Vect{I} + \Delta t \systemA_{RR}
      \end{array} \right),
\end{equation}
which can be written as $\transitionA^{APFE} = $
\begin{equation}\label{e:APFE_ex}
    \Vect{I} + \Delta t \left( \begin{array}{cc}
        \systemA_{LL} & \systemA_{LR} \\
        \systemA_{RL} & \systemA_{RR} \\
      \end{array} \right) + \left( \begin{array}{cc}
        \delta t (\Delta t - \delta t)  \left( \systemA_{LL}^2 + \systemA_{LR} \systemA_{RL} \right) & -\delta t \systemA_{LR} + \delta t (\Delta t - \delta t) \systemA_{LR} \systemA_{RR}\\
        \Vect{0} & \Vect{0} \\
      \end{array} \right)
\end{equation}

Considering consistency, we can again compare \Eqn \eqref{e:APFE_ex} with a Taylor expansion of the exact solution of \Eqn \eqref{e:decoupled-scheme} and obtain that the scheme has an error of $\|W^{n+1}- W(t+\Delta t)\|=\mathcal{O}(\Delta t^2)$, i.e., it is first order accurate in time.

The stability analysis is again based on \Eqn \eqref{e:model_eqn}, which leads to the following transition matrix
\begin{equation}\label{e:APFE_stab_matrix}
    \transitionA^{APFE} = \left( \begin{array}{cc}
        \left(1+\left(\frac{\Delta t}{\delta t}- K \right) \delta t \lambda_L \right)\left( 1 + \delta t \lambda_L \right)^K & 0 \\
        0 & 1+\Delta t \lambda_R \\
      \end{array} \right)
\end{equation}

The stability domain of the APFE scheme is derived in the same fashion as for the AFE scheme using $\|\transitionA^{APFE}\| \leq 1$ and given by
\begin{equation}\label{e:APFE_stab_region}
    \lambda_L \in C\left( - \frac{1}{\Delta t}, \frac{1}{\Delta t} \right) \cup C\left( - \frac{1}{\delta t}, \frac{1}{\delta t}\left(\frac{\delta t}{\Delta t}\right)^{K+1}\right) \textrm{ and } \lambda_R \in C\left( - \frac{1}{\Delta t}, \frac{1}{\Delta t} \right),
\end{equation}
as shown in \Fig \ref{fig:APFE_stab_region}.

Using the spectral analysis of the previous section, we can derive the respective bounds on the parameters $\delta t$ and $\Delta t$, and $K$ depending on the spatial discretization, the $CFL$ number, and the relaxation times $\epsilon_{L,R}$.
In order to include the whole spectrum for the model analyzed in Theorem \ref{th:spectrum} within the stability domain of the APFE scheme, we consider the same spatially varying relaxation time with discrete values $\epsilon_L \ll \epsilon_R$. We then determine the parameters based on \Eqn \eqref{e:APFE_stab_region} and \ref{th:spectrum} as
\begin{eqnarray}
  \frac{1}{\delta t} &=& -\lambda_{\epsilon_L} \label{e:APFE_stab_cond1}\\
  \frac{1}{\delta t}\left(\frac{\delta t}{\Delta t}\right)^{K+1} &\geq& R \label{e:APFE_stab_cond2} \\
  \frac{1}{\Delta t} &\geq& \frac{1}{2}\left(-\lambda_{\epsilon_R} + R \right) \label{e:APFE_stab_cond3}\\
  \frac{1}{\Delta t} &\geq& R \label{e:APFE_stab_cond4}
\end{eqnarray}

Inserting $\Delta t = CFL \frac{\Delta x}{\lambda_{max}}$ and the known values of $\lambda_{\epsilon_L}, \lambda_{\epsilon_R}$ and $R$ for the different schemes yields:
\begin{itemize}
  \item[1.] the upwind scheme is conditionally stable for $\delta t = \frac{1}{\frac{\lambda_{max}}{\Delta x} + \frac{1}{\epsilon_L}} = \mathcal{O}(\epsilon_L)$, $K=1$, and  $CFL \leq \frac{1}{\frac{\Delta x}{2\epsilon_R \lambda_{max}}+1}$.
  \item[2.] the Lax-Friedrichs scheme is unconditionally unstable because the intermediate cluster cannot be integrated in a stable way.
  \item[3.] the FORCE scheme is conditionally stable for $\delta t = \frac{1}{\frac{\lambda_{max}}{2\Delta x}\left(\frac{1}{CFL} + CFL\right)  + \frac{1}{\epsilon}} = \mathcal{O}(\epsilon_L)$, $K=1$, and  $CFL \leq - \frac{\Delta x}{2\epsilon_R \lambda_{max}} + \sqrt{\frac{\Delta x}{2\epsilon_R \lambda_{max}}^2+1}$.
\end{itemize}

Note that the value $K=1$ is chosen here for convenience. Other values are possible and extend the stability region towards the slow cluster, see \cite{Melis2019}.

The eigenvalues of the transition matrix $\transitionA_{APFE}$ with an upwind spatial discretization and parameters according to the aforementioned stability conditions are plotted in \Fig \ref{fig:APFE_transition_stab_region}. Again, all eigenvalues are inside the unit circle and we conclude that the method is indeed stable for the parameter settings predicted by our analysis. The eigenvalues $\lambda_i$ are very close to the stability boundary $\|\lambda_i \| < 1$, which indicates that both the estimates of the spectrum of the model equation and the stability properties of the scheme are relatively sharp.

In comparison to the AFE scheme above, the APFE scheme uses less small time steps $\delta t$ in the stiff region, while performing the same large time step $\Delta t$ in the non-stiff region. The speedup is thus purely due to a more efficient integration of the stiff terms in the stiff region. Due to the relaxation time in the non-stiff region, the Lax-Friedrichs scheme is still unstable, as in the case of the AFE method.

\subsection{Adaptive Projective Projective Forward Euler (APPFE)}
\label{sec:APPFE}
The APFE method is already able to overcome the stability constraints in the stiff region with relaxation time $\epsilon_L$. However, in order to overcome a potential stability constraint in the other part of the domain with $\epsilon_R$, a standard FE method in that domain is not enough. We will therefore introduce an APPFE method, that uses a PFE method in both regions, but adapts the inner time step size $\delta t$ to the respective relaxation times. The idea is outlined in \Fig \ref{fig:APPFE3_grid}.
\begin{figure}[htb!]
    \centering
    \includegraphics[width=0.75\textwidth]{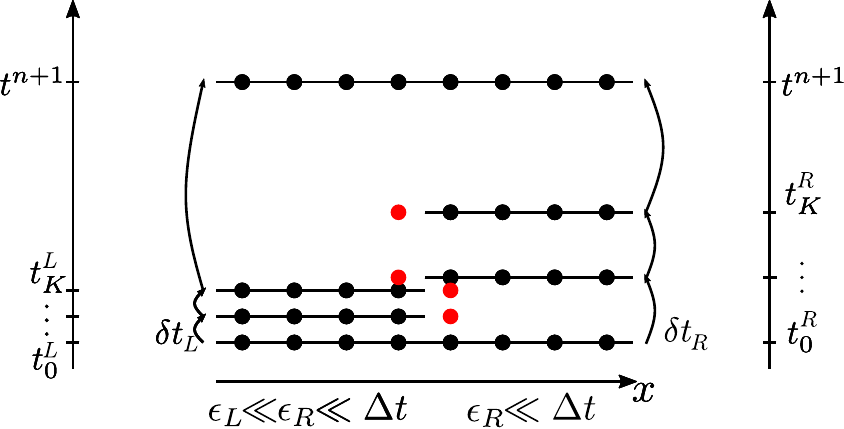}
    \caption{Adaptive projective projective forward Euler scheme (APPFE) with $K+1=3$ inner small time steps $\delta t_L$ in stiff region (left) and inner small time steps $\delta t_R > \delta t_L$ in semi stiff region (right). Values of red cells at both sides of the boundary of the two domains need to be reconstructed. }
    \label{fig:APPFE3_grid}
\end{figure}

For simplicity, we assume that there is a significant gap between the relaxation times $\epsilon_L$ and $\epsilon_R$, resulting in $\delta_L \ll \delta_R$, such that the small time steps and the extrapolation are not intertwined, as shown in \Fig \ref{fig:APPFE3_grid}. The update and interpolation of boundary values are then  same as in the previous schemes. The Projective Forward Euler schemes uses $K_L$ and $K_R$ inner Forward Euler steps, respectively, and perform a subsequent extrapolation over the remaining respective time interval, i.e.,
\begin{eqnarray}
    \Vect{W}_L^{n,k+1} &=& \Vect{W}_L^{n,k} + \delta t_L \left( \systemA_{LL} \cdot \Vect{W}_L^{n,k} + \systemA_{LR} \cdot \Vect{W}_{RL}^{n,k} \right), k = 0,\ldots,K\\
    \Vect{W}_L^{n+1} &=& \Vect{W}_L^{n,K_L+1} + \left(\Delta t_L - (K_L+1) \delta_L t\right) \frac{\Vect{W}_L^{n,K_L+1} - \Vect{W}_L^{n,K_L}}{\delta t_L}, \\
    \Vect{W}_R^{n,k+1} &=& \Vect{W}_R^{n,k} + \delta t_R \left( \systemA_{RL} \cdot \Vect{W}_LR^{n,k} + \systemA_{RR} \cdot \Vect{W}_{R}^{n,k} \right), k = 0,\ldots,K\\
    \Vect{W}_R^{n+1} &=& \Vect{W}_R^{n,K_R+1} + \left(\Delta t_R - (K_R+1) \delta_R t\right) \frac{\Vect{W}_R^{n,K_R+1} - \Vect{W}_R^{n,K_R}}{\delta t_R},
\end{eqnarray}
where the necessary boundary values $\Vect{W}_{RL}^{n,k}$ for the left update and $\Vect{W}_LR^{n,k}$ for the right update are obtained via interpolation, i.e.,
\begin{eqnarray}
    \Vect{W}_{RL}^{n,k} &=& \Vect{W}_R^{n} + (k+1) \cdot \delta t_L \cdot \left( \systemA_{RL} \cdot \Vect{W}_L^{n} + \systemA_{RR} \cdot \Vect{W}_{R}^{n} \right), \label{interpolation_APPFE_R}\\
    \Vect{W}_{LR}^{n,k} &=& \Vect{W}_L^{n} + \left( (k+1)\delta t_R - K_L\delta t_L \right)\frac{\Vect{W}_L^{n,K_L+1}- \Vect{W}_L^{n,K_L}}{\delta t_L}. \label{interpolation_APPFE_L}
\end{eqnarray}

As the transition matrix is a lengthy expression that has no further use for us expect for the stability analysis, we omit its rather tedious derivation here and focus on the stability properties, which can be obtained from the definition of the scheme applied to the model system \Eqn \eqref{e:model_eqn}. In this case, the transition matrix reads
\begin{equation}\label{e:APPFE_stab_matrix}
    \transitionA^{APPFE} = \left( \begin{array}{cc}
        \left(1+\left(\frac{\Delta t}{\delta t_L}- K_L \right) \delta t_L \lambda_L \right)\left( 1 + \delta t_L \lambda_L \right)^{K_L} & 0 \\
        0 & \left(1+\left(\frac{\Delta t}{\delta t_R}- K_R \right) \delta t_R \lambda_R \right)\left( 1 + \delta t_R \lambda_R \right)^{K_R} \\
      \end{array} \right)
\end{equation}

The stability domain of the APPFE scheme is then derived using $\|\transitionA^{APPFE}\| \leq 1$ and is given by
\begin{equation}\label{e:APPFE_stab_region1}
    \lambda_L \in C\left( - \frac{1}{\delta t_L}, \frac{1}{\Delta t} \right) \cup C\left( - \frac{1}{\delta t_L}, \frac{1}{\delta t_L}\left(\frac{\delta t_L}{\Delta t}\right)^{K_L+1}\right)
\end{equation}
and
\begin{equation}\label{e:APPFE_stab_region2}
    \lambda_R \in C\left( - \frac{1}{\delta t_R}, \frac{1}{\Delta t} \right) \cup C\left( - \frac{1}{\delta t_R}, \frac{1}{\delta t_R}\left(\frac{\delta t_R}{\Delta t}\right)^{K_R+1}\right),
\end{equation}
as shown in \Fig \ref{fig:APPFE_stab_region}.

Using the spectral analysis of the previous section, we can derive the respective bounds on the parameters $\delta t_L, K_L, \delta t_R, K_R$, and $\Delta t$ depending on the spatial discretization, the $CFL$ number, and the relaxation times $\epsilon_{L,R}$.
In order to include the whole spectrum for the model analyzed in Theorem \ref{th:spectrum} within the stability domain of the APFE scheme, we consider the same spatially varying relaxation time with discrete values $\epsilon_L \ll \epsilon_R$. We then determine the parameters based on \Eqns \eqref{e:APPFE_stab_region1} and \eqref{e:APPFE_stab_region2} and Theorem \ref{th:spectrum} as
\begin{eqnarray}
  \frac{1}{\delta t_L} &=& -\lambda_{\epsilon_L} \label{e:APPFE_stab_cond1}\\
  \frac{1}{\delta t_R} &=& -\lambda_{\epsilon_R} \label{e:APPFE_stab_cond2}\\
  \frac{1}{\delta t_L}\left(\frac{\delta t_L}{\Delta t}\right)^{K_L+1} &\geq& R \label{e:APPFE_stab_cond3} \\
  \frac{1}{\delta t_R}\left(\frac{\delta t_R}{\Delta t}\right)^{K_R+1} &\geq& R \label{e:APPFE_stab_cond4}
\end{eqnarray}

Using $\Delta t = CFL \frac{\lambda_{max}}{\Delta x}$ and the known values of $\lambda_{\epsilon_L}, \lambda_{\epsilon_R}$ and $R$ for the different schemes yields:
\begin{itemize}
  \item[1.] the upwind scheme is conditionally stable for $\delta t_{L/R} = \frac{1}{\frac{\lambda_{max}}{\Delta x} + \frac{1}{\epsilon_{L/R}}} = \mathcal{O}(\epsilon_{L/R})$, $K=1$, and  $CFL \leq 1$.
  \item[1.] the Lax-Friedrichs scheme is conditionally stable for $\delta t_{L/R} = \frac{1}{\frac{\lambda_{max}}{CFL \Delta x} + \frac{1}{\epsilon_{L/R}}} = \mathcal{O}(\epsilon_{L/R})$, $K=1$, and  $CFL \leq 1$.
  \item[3.] the FORCE scheme is conditionally stable for $\delta t_{L/R} = \frac{1}{\frac{\lambda_{max}}{2\Delta x}\left(\frac{1}{CFL} + CFL\right)  + \frac{1}{\epsilon_{L/R}}} = \mathcal{O}(\epsilon_{L/R})$, $K=1$, and  $CFL \leq 1$.
\end{itemize}

Again, $K=1$ is chosen here for convenience. Other values are possible and extend the stability region towards the slow cluster, see \cite{Melis2019}.

All the AFE and APFE scheme, the $CFL$ condition is much less restricted and a full convective time step $\Delta t = \frac{\Delta x}{\lambda_{max}}$ with $CFL = 1$ is possible. This reduces the runtime significantly in case of stiff relaxation times.

\begin{remark}
    While focussing the analysis on first-order outer time integrators like the Forward Euler scheme (FE) in this paper, the same analysis and implementation can be performed for higher-order Runge-Kutta schemes, that replace the outer integrator \cite{Lafitte2016,Lafitte2017}. This leads to Adaptive Projective Runge-Kutta schemes (APRK). One numerical example application of a second order APRK based on the Heun method as outer integrator is given in the next section. Another extension is possible for connected spectra via Telescopic Projective Integration schemes (TPI), developed in \cite{Melis2019,Melis2016}.
\end{remark}     
\section{Numerical results}
\label{sec:results}
In this section, we briefly validate the numerical accuracy of the newly derived adaptive projective methods with the help of a two-beam test case and give theoretical results for the potential speedup of our new methods. As the focus of this paper is the derivation and analysis of the new schemes, we do not perform exhaustive tests and simulations of all possible combinations of schemes, spatial discretizations and parameter settings, but leave this for future work.

\subsection{Two-beam test}
The two beam test case is a standard test case for rarefied gases and was used in \cite{Koellermeier2017d}, \cite{Schaerer2015} for different moment models for constant relaxation time $\epsilon$. A spatially varying relaxation time was first tested in \cite{Koellermeier2021}. For more detailed information on the test setup, we refer to the literature.

The initial Riemann data for the left-hand side and the right-hand side of the domain, respectively, is given by
\begin{equation}
    \vect{w}_L = \left( 1,0.5,1,0,\ldots,0\right)^T, \quad \quad \vect{w}_R = \left( 1,-0.5,1,0,\ldots,0\right)^T,
    \label{e:2beam_IC}
\end{equation}
modeling two colliding Maxwellian distributed particle beams. This test case is especially challenging as it is difficult to represent the analytical solution using a polynomial expansion. In the free streaming case $\epsilon = \infty$ the analytical solution is a sum of two Maxwellians according to \cite{Schaerer2015}.

The numerical tests are performed on the computational domain $[-10,10]$, discretized using $500$ points and the end time is $t_{\textrm{END}}=0.1$ using a constant macroscopic time step $\Delta t$ according to a CFL number of $0.5$ for all tests. This results in the macroscopic time step size $\Delta t = 3.85 \cdot 10^{-4}$ for the HME model \eqref{sec:HME} with $M+1=10$ equations, which is used here as one example. Note that extensive tests of the QBME moment model have been performed in \cite{Koellermeier2017d,Schaerer2015} for the rarefied regime and in \cite{Koellermeier2021} in the case of small relaxation time $\epsilon \ll 1$. In the latter case, we can assume that the model error of the moment model can be neglected and do not show a comparison with reference models. For more details on the accuracy of moment models for the two-beam model, we refer to \cite{Koellermeier2017d}.

The spatially varying relaxation time is chosen as
\begin{equation}\label{e:collision_rate_adaptive_2beam}
    \epsilon(x) = \left\{
  \begin{array}{cl}
    \epsilon_L = 10^{-4} & \textrm{if } x < 0, \\
    \epsilon_R = 10^{-2}& \textrm{if } x \geq 0, \\
  \end{array} \right.
\end{equation}

As the spatial discretization method, we use the first order FORCE scheme and compare two methods for the time integration:
\begin{itemize}
  \item[1.] A standard PI scheme using $\delta t = \epsilon_L$, $K=2$, and $\Delta t$ according to a macroscopic $CFL=0.5$.
  \item[2.] An APFE scheme using $\delta t = \epsilon_L$, $K=2$ in the stiff left part of the domain and $\Delta t$ according to a macroscopic $CFL=0.5$ in the right half of the domain.
\end{itemize}
Note that an APPFE method is not necessary here as there is no additional constraint on the time step size in the non-stiff domain due to the relatively fine spatial discretization. When using higher-order spatial discretization and larger time step sizes $\Delta t$, a coarser grid would lead to possible gains for an APPFE (or a higher-order APPRK) method. This is left for future work.

The numerical results shown in \Fig \ref{fig:2beam} clearly show that the adaptive scheme is able to achieve high accuracy in this numerical test. \Fig \ref{fig:API_2beam500V0p5Piecewise100QBME10Kn0p0001APFEFORCE1p} shows that the error with respect to the first order PFE scheme is negligible for the pressure $p$, while \ref{fig:API_2beam500V0p5Piecewise100QBME10Kn0p0001APFEFORCE1Q} shows even less diffusivity for the heat flux $Q$. This is due to the fact that the APFE method performs less time steps in the non-stiff domain, thus decreasing the added numerical diffusion. Comparing the standard PFE scheme with the APPFE scheme, we clearly see that the adaptivity does not induce any oscillations for this test case. For future work, higher-order spatial discretizations and adaptive higher-order time stepping methods like APPRK need to be investigated.
\begin{figure}[htb!]
    \centering
    \begin{subfigures}
    \subfloat[Pressure $p$. \label{fig:API_2beam500V0p5Piecewise100QBME10Kn0p0001APFEFORCE1p}
    ]{\includegraphics[width=0.5\linewidth]{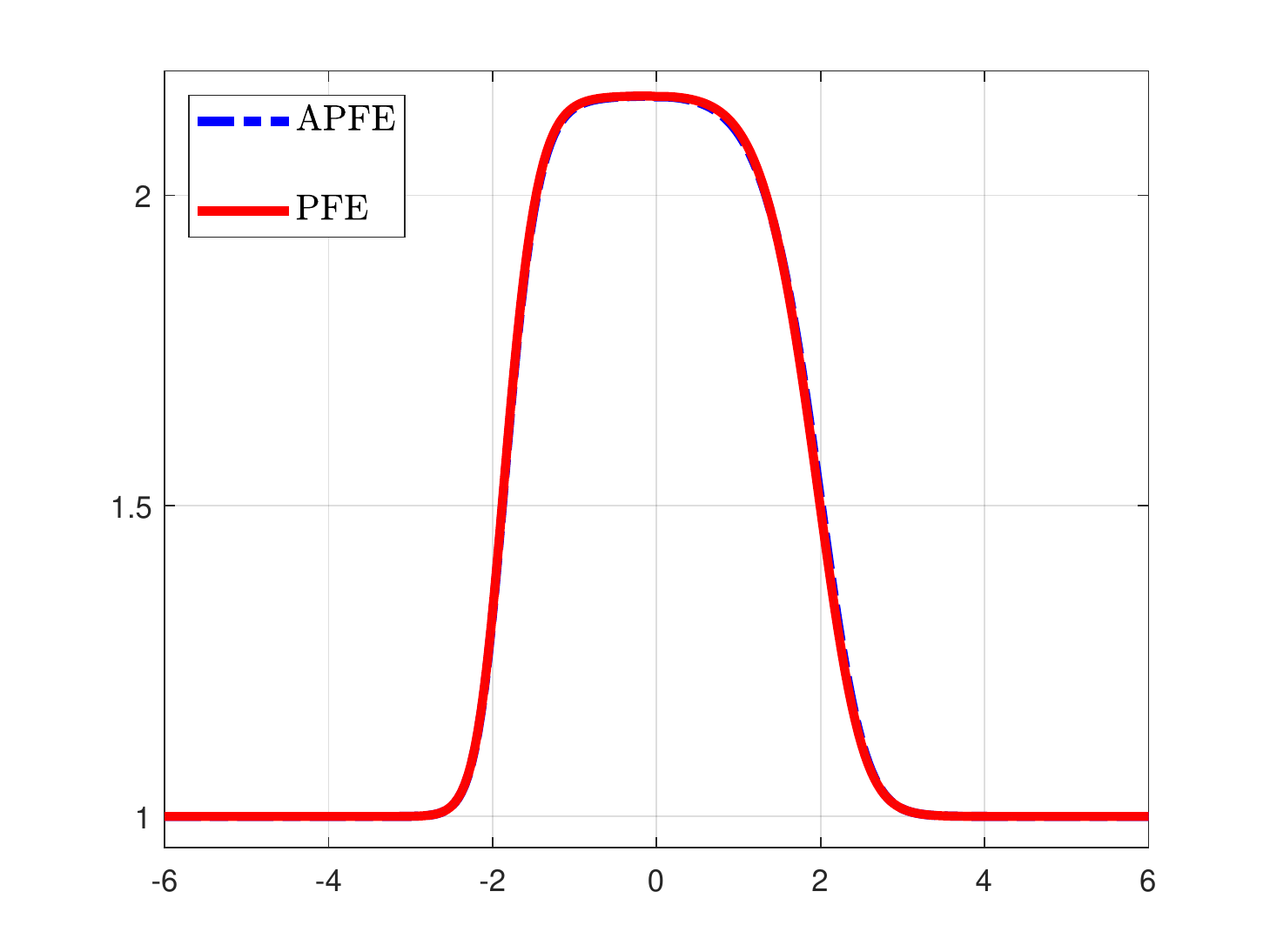}}
    \subfloat[Heat flux $Q$. \label{fig:API_2beam500V0p5Piecewise100QBME10Kn0p0001APFEFORCE1Q}
    ]{\includegraphics[width=0.5\linewidth]{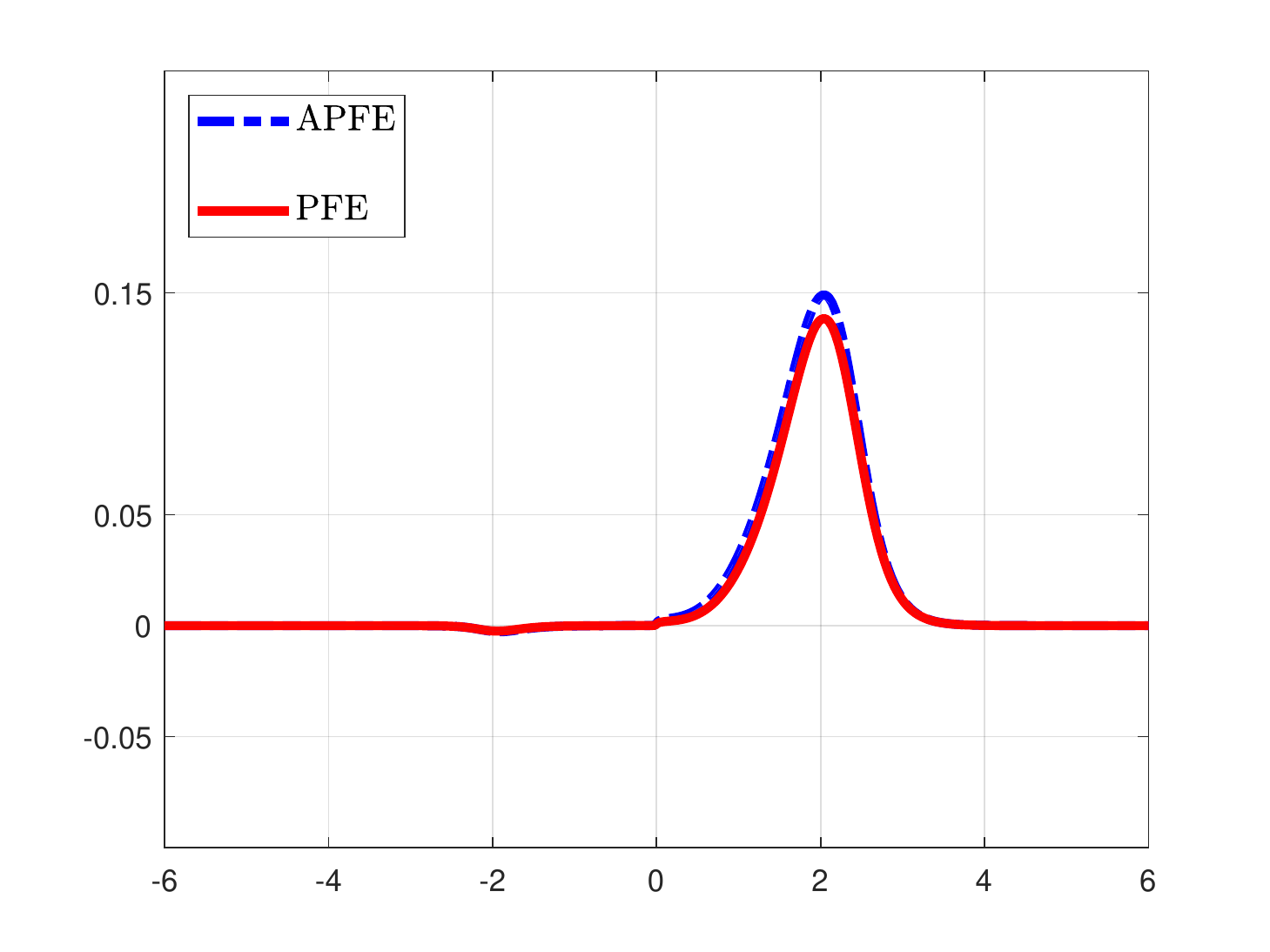}}
    \end{subfigures}
    \caption{Two-beam test comparison of standard projective scheme PFE and adaptive projective scheme APFE. Spatial discretization uses first order FORCE scheme. Spatially varying relaxation time $\epsilon_L = 10^{-4}$, $\epsilon_R = 10^{-2}$.}
    \label{fig:2beam}
\end{figure}

\subsection{Computational speedup of adaptive schemes}
\label{sec:results_speedup}
In this section, we give some results for the potential speedup of our new methods for a standard test case. As the focus of this paper is the derivation and analysis of the new schemes, we do not perform exhaustive numerical tests and simulations of all possible combinations of schemes, spatial discretizations and parameter settings, but leave this for future work.

The adaptive schemes in this paper are used to speed up the simulation of models with spectral gaps due to model differences throughout the computational domain, such that one (or more) stiff and one non-stiff domains are present. For the estimation of the speedup in comparison to a standard Forward Euler method (FE), we neglect the extrapolation steps of the PI methods and the boundary treatment. The speedup $S$ of a certain scheme with respect to a standard FE method is given by the ratio of the number of time steps $n$ over a unit time interval and can be computed according to \cite{Melis2019} as
\begin{equation}\label{e:speedup}
  S = \frac{n_{FE}}{n_{scheme}}
\end{equation}

For the different schemes, the number of time steps over a unit time interval is computed using the stability analysis from \Sects \ref{sec:standard_schemes} and \ref{sec:adaptive_schemes}. As an example, we consider the Upwind scheme and the largest possible time steps, to use explicit formulas for the speedup. We furthermore assume that a fraction of $\theta \in [0,1]$ of the computational domain uses the stiff relaxation time $\epsilon_L$, while the remaining $1-\theta$ are governed by the (also potentially) stiff relaxation time $\epsilon_R$.
\begin{itemize}
  \item[FE] $n_{FE} = \frac{1}{\Delta t}$, with time step size $\Delta t = CFL \frac{\Delta x}{\lambda_{max}}$ and $CFL = \frac{1}{\frac{\Delta x}{2 \epsilon_L \lambda_{max}}+1}$.
  \item[PFE] $n_{PFE} = \frac{K+1}{\Delta t}$, with time step size $\Delta t = CFL \frac{\Delta x}{\lambda_{max}}$ and $CFL = \frac{1}{\frac{\Delta x}{2 \epsilon_R \lambda_{max}}+1}$.
  \item[AFE] $n_{AFE} = \frac{\theta}{\delta t}+\frac{1-\theta}{\Delta t}$, with small time step size $\delta t = \frac{1}{\frac{\lambda_{max}}{\Delta x}+\frac{1}{2\epsilon_L}}$ and $\Delta t = CFL \frac{\Delta x}{\lambda_{max}}$ with $CFL = \frac{1}{\frac{\Delta x}{2 \epsilon_R \lambda_{max}}+1}$.
  \item[APFE] $n_{APFE} = \frac{\theta (K+1)}{\Delta t}+\frac{1-\theta}{\Delta t}$, with time step size $\Delta t = CFL \frac{\Delta x}{\lambda_{max}}$ and $CFL = \frac{1}{\frac{\Delta x}{2 \epsilon_R \lambda_{max}}+1}$.
  \item[APPFE] $n_{APPFE} = \frac{\theta (K_L+1)}{\Delta t}+\frac{(1-\theta)(K_R+1)}{\Delta t}$, with time step size $\Delta t = CFL \frac{\Delta x}{\lambda_{max}}$ and $CFL = 1$.
\end{itemize}

Note that the main gain for the speedup results from a less severe constraint on the CFL number.

As a numerical example we consider the base settings $\lambda_{max}=6$, $\Delta x = \frac{1}{50}$ and the projective schemes PFE, APFE, APPFE will use $K=K_L=K_R=1$. For the remaining parameters, we consider the following three scenarios:
\begin{itemize}
  \item[(A)] medium large spectral gaps on equally large domains $\epsilon_L = 10^{-4},\epsilon_R = 10^{-3}$, $\theta = \frac{1}{2}$.
  \item[(B)] large spectral gaps on equally large domains $\epsilon_L = 10^{-6},\epsilon_R = 10^{-4}$, $\theta = \frac{1}{2}$.
  \item[(C)] large spectral gaps and the stiffness only in a small domain $\epsilon_L = 10^{-6},\epsilon_R = 10^{-4}$, $\theta = \frac{1}{10}$.
\end{itemize}

\begin{table}[H]
    \centering
    \caption{Speedup of time integration schemes in comparison to standard FE scheme.}
    \label{tab:speedup}
    \begin{tabular}{c||c|c|c}
      case  & (A) & (B) & (C)  \\ \hline \hline
      FE    & 1.0 & 1.0 & 1.0  \\ \hline
      PFE   & 3.3 & 47.2 & 47.2  \\ \hline
      AFE   & 1.7 & 1.9 & 9.1  \\ \hline
      APFE  & 4.4 & 62.9 & 85.8  \\ \hline
      APPFE & 8.8 & 833.8 & 833.8  \\
    \end{tabular}
\end{table}
The speedup depending on the scenario and the time integration scheme is given in \Table \ref{tab:speedup}. While the standard PFE scheme already achieves a considerable speedup for cases with a large spectral gap, only the APFE and APPFE methods can make us of the full potential by treating both domains differently. The AFE method gives a speedup in comparison to the FE method, but does not overcome the stiff time step constraint in the stiff part of the domain. It is clear from \Table \ref{tab:speedup} that only the projective schemes PFE, APFE, and APPFE can achieve a significant speedup and adaptivity again drastically improves the performance of the projective schemes.     
\section{Conclusion}
\label{sec:conclusion}
In this paper, we developed and analyzed the first spatially adaptive projective integration schemes for stiff hyperbolic balance laws with spectral gaps to speed up standard time integrations schemes.

After introduction of the model PDEs exemplified by two models from rarefied gases, a detailed spectral analysis revealed the spectral gap for different spatial discretization schemes. The analytical derivation was validated by a numerical example that showed the accuracy of the derived eigenvalue bounds. After that, standard time integration schemes like the Forward Euler scheme or the Projective Forward Euler scheme, were analyzed and a prohibitive condition for the $CFL$ number was derived in case of large spectral gaps. The newly derived spatially adaptive time integration schemes were able to successively overcome these constraints on the time step size by using one scheme in the stiff region in combination with another scheme in the other region. The explicit formulation of the projective integration schemes allowed for an accurate analysis of the stability properties such that parameter bounds could also derived and validated for the adaptive schemes. Additionally, we outlined an extension towards higher-order time integration schemes or telescopic schemes with connected stability domains.


The results showed that the adaptive projective integration schemes achieved a high accuracy and a significant speedup that grows with the variations in the relaxation time.

The analysis in this paper allows for a promising extension towards higher-order methods via adaptive Projective Runge-Kutta schemes \cite{Lafitte2017} or adaptive Telescopic Projective Integration schemes \cite{Melis2019} in the future. Additionally, more numerical tests for applications need to be performed, e.g., for moment models and free-surface flows \cite{Koellermeier2020c}.     
\appendix

\section{Non-conservative Spatial Discretization}
\label{app:NC}
The general system \eqref{e:vars_system1D} can be discretized in space-time using cell-averages $\Var_i^{n}$ at cell $i$ for $i=1, \ldots, N_x$ and time step $n$ using the finite volume method in non-conservative form as follows
\begin{equation}\label{e:scheme}
    \Var_i^{n+1}=\Var_i^n-\frac{\Delta t}{\Delta x}\big(D_{i-1/2}^{+} + D_{i+1/2}^{-}\big) - \frac{1}{\epsilon(x_i)} \vect{S} \left( \Var_{i}^n \right),
\end{equation}
with fluctuations
\begin{equation}
    D_{i+1/2}^{\pm}=\Vect{A}_\Phi^\pm (\Var_i,\Var_{i+1})
\end{equation}
given by a polynomial viscosity method (PVM)
\begin{equation}
    \Vect{A}_\Phi^\pm (\Var_L,\Var_R) =  \frac{1}{2}\left( \Vect{A}_\Phi \cdot (\Var_R-\Var_L) \pm \Vect{Q}_\Phi \cdot (\Var_R-\Var_L)\right),  \label{fvm}
\end{equation}
with generalized Roe linearization $\Vect{A}_\Phi = \Vect{A}_\Phi(\Var_L,\Var_R)$ given by
\begin{equation}\label{e:Roe-gen}
    \Vect{A}_\Phi(\Var_L,\Var_R)\cdot (\Var_R-\Var_L)=\int_0^1 \Vect{A} (\Phi(s;\Var_L,\Var_R))\frac{\partial\Phi}{\partial s}(s;\Var_L,\Var_R)\,ds.
\end{equation}
Note that $\Phi(s;\Var_L,\Var_R)$ denotes a path connecting the left and right states at the cell interface, such that $\Phi(0;\Var_L,\Var_R)= \Var_L$ and $\Phi(1;\Var_L,\Var_R)= \Var_R$. The choice of paths has been studied in the literature and especially for moment systems of the forms \eqref{e:QBME_A} and \eqref{e:grad_A_HSM}, a linear path $\Phi(s;\Var_L,\Var_R)= \Var_R = \Var_L + s \left(\Var_R - \Var_L \right)$ was found to be sufficiently accurate \cite{Koellermeier2017d,Koellermeier2017a}.

The PVM method is using a viscosity matrix $\Vect{Q}_\Phi = \Vect{Q}_\Phi(\Var_L,\Var_R)$ depending on the left and right states. It has the form $\Vect{Q}_\Phi(\Var_i,\Var_{i+1}) = P^{i+1/2}\left(\Vect{A}_\Phi\left(\Var_i,\Var_{i+1}\right)\right)$, where $P^{i+1/2}\left(\Vect{A}_\Phi(\Var_L,\Var_R)\right)$ is a function of the generalized Roe matrix.

Many standard schemes can be written in the PVM form:
\begin{itemize}
  \item the Upwind or Roe scheme uses
        \begin{equation}\label{e:upwind_PVM}
            \Vect{Q}_\Phi(\Var_L,\Var_R) = \left|\Vect{A}_\Phi(\Var_L,\Var_R)\right|,
        \end{equation}
        which is not a polynomial in $\Vect{A}_\Phi(\Var_L,\Var_R)$ and can only be constructed given the full eigenstructure of the model.
  \item the Lax-Friedrichs scheme uses 
        \begin{equation}\label{e:LF_PVM}
            \Vect{Q}_\Phi(\Var_L,\Var_R) = \frac{\Delta x}{\Delta t} \Vect{I}.
        \end{equation}
        with $\lambda_N$ and $\lambda_1$ the largest and smallest eigenvalues, respectively, of the linearized Roe matrix at the cell interface.
  \item the FORCE scheme uses 
        \begin{equation}\label{e:FORCE_PVM}
            \Vect{Q}_\Phi(\Var_L,\Var_R) = \frac{\Delta x}{2 \Delta t} \Vect{I} + \frac{ \Delta t}{2 \Delta x} \Vect{A}_\Phi(\Var_L,\Var_R).
        \end{equation}
\end{itemize}

In general, the function should be as close as possible to the absolute value function while $P^{i+1/2}\left(x\right)\geq \left| x \right|$ is required for stability.\\

When considering a system with constant system matrix or a linearization of the system such that $\Vect{A}(\Var) = const$, e.g. \eqref{e:grad_A_HSM}, the non-conservative scheme shown here simplifies to $\Vect{A}_\Phi(\Var_L,\Var_R)= \Vect{A}$ and $\Vect{Q}_\Phi(\Var_i,\Var_{i+1})= \Vect{Q}$. Additionally, we assume a linear or linearized source term, e.g. \eqref{e:BGK_term}, such that $\vect{S} \left( \Var_{i}^n \right) = \Vect{S} \Var_{i}^n$, for constant matrix $\Vect{S} \in \mathbb{R}^{N \times N}$.

Note that a higher-order extension of the non-conservative scheme is possible as described in \cite{Koellermeier2020}.




\bibliographystyle{abbrv}
\bibliography{library_fixed}

\end{document}